\newtheorem{lm}{Lemma}[section]
\newtheorem{prop}[lm]{Proposition}
\newtheorem{theorem}[lm]{Theorem}
\newtheorem{cy}[lm]{Corollary}
\theoremstyle{definition}
\newtheorem{df}[lm]{Definition}
\newtheorem{rk}[lm]{Remark}
\newcommand{\beq}{\begin{equation}}
\newcommand{\eeq}{\end{equation}}
\newcommand{\be}{\begin{enumerate}}
\newcommand{\ee}{\end{enumerate}}
\newcommand{\bp}{\begin{proof}}
\newcommand{\ep}{\end{proof}}
\newcommand{\bi}{\begin{itemize}}
\newcommand{\ei}{\end{itemize}}
\newcommand{\bea}{\begin{eqnarray*}}
\newcommand{\eea}{\end{eqnarray*}}
\newcommand{\bml}{\begin{multline*}}
\newcommand{\eml}{\end{multline*}}
\newcommand{\ang}[1]{\left\langle #1 \right\rangle}
\newcommand{\ZZ}{\mathbb{Z}}
\DeclareMathOperator{\Th}{Th}
\edef\storedcatcodeat{\the\catcode`\@} \catcode`\@=11
\begin{document}

\renewcommand{\thefootnote}{\fnsymbol{footnote}}
\footnotetext{2010 \textit{Mathematics Subject Classification.} 03C60, 20E05, 20F70.}
\renewcommand{\thefootnote}{\arabic{footnote}}

\title{On countable elementary free groups}
\author{
    Olga Kharlampovich\thanks{
        City University of New York, Graduate Center and Hunter College\\
        \indent\hspace{0.25cm}\texttt{okharlampovich@gmail.com}
    }
    \ and
    Christopher Natoli\thanks{
        City University of New York, Graduate Center\\
        \indent\hspace{0.25cm}\texttt{chrisnatoli@gmail.com}
    }
}
\date{}

\maketitle

\begin{abstract}
    We prove that if a countable group is elementarily equivalent to a non-abelian free group and all of its  abelian subgroups are cyclic, then the group is a union of a chain of regular NTQ groups (i.e., hyperbolic towers).
\end{abstract}

\section{Introduction}

A full characterization of finitely generated groups that are elementarily equivalent to non-abelian free groups was given in \cite{KhaMya2006elementary}, \cite{Sel2006diophantine}. These constructions are called hyperbolic towers or, equivalently, regular NTQ groups. Groups that are elementarily equivalent to non-abelian free groups have since been deemed \emph{elementary free groups}. Elementary free groups that are not finitely generated are not as well understood.

Ultraproducts are natural examples of elementary free groups, but they are uncountable and less tractable from a group-theoretic point of view. Some progress has been made in the direction of understanding countable elementary free groups by Kharlampovich, Myasnikov, and Sklinos \cite{KhaMyaSkl2020fraisse} by modifying the model-theoretic construction of Fra\"iss\'e limits, which yielded a countable elementary free group that is the union of a chain of finitely generated elementary free groups, in addition to other nice model-theoretic properties among which it is unique. They ask whether every countable elementary free group can be obtained as the union of a chain of finitely generated elementary free groups. \cite{KhaNat2020non} give $\ZZ*(\ZZ\oplus\mathbb{Q})$ as a counter-example, employing the fact that finitely generated elementary free groups cannot contain non-cyclic abelian subgroups.

In this paper, we positively answer a modified version of the question from \cite{KhaMyaSkl2020fraisse}, in which we add the extra condition that abelian subgroups must be cyclic. Our main theorem is as follows:
\begin{theorem} \label{descr} Let $M$ be a countable elementary free group in which all  abelian subgroups are cyclic. Then $M$ is a union of a chain of regular NTQ groups (i.e., hyperbolic towers).
\end{theorem}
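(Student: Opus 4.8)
The plan is to establish the following reformulation and then conclude by a routine chain argument: \emph{every finitely generated subgroup $G$ of $M$ is contained in a finitely generated subgroup $N$ of $M$ that is a regular NTQ group}. Granting this, write $M=\bigcup_{n<\omega}M_n$ as an increasing union of finitely generated subgroups (possible since $M$ is countable) and build inductively a chain $N_0\le N_1\le\cdots$ of regular NTQ subgroups of $M$ with $M_n\le N_n$: let $N_0$ be a regular NTQ subgroup of $M$ containing $M_0$ and, given $N_n$, let $N_{n+1}$ be a regular NTQ subgroup of $M$ containing the finitely generated group $\ang{N_n\cup M_{n+1}}$. Then $M=\bigcup_nN_n$ exhibits $M$ as a union of a chain of regular NTQ groups. (Conversely, from any such chain one recovers the reformulation, since every finitely generated subgroup of $M$ lies in one term; so the reformulation is equivalent to Theorem~\ref{descr}.)

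So fix a finitely generated $G\le M$. Since $M\equiv F$ for some non-abelian free group $F$, the subgroup $G$ satisfies the universal theory of $F$, and being finitely generated it is therefore a limit group; as every abelian subgroup of $M$, and hence of $G$, is cyclic, $G$ is a torsion-free hyperbolic limit group. By \cite{KhaMya2006elementary,Sel2006diophantine} the finitely generated elementarily free groups are exactly the regular NTQ groups, so it suffices to find a finitely generated $N$ with $G\le N\le M$ and $N\preceq M$: then $N\equiv M\equiv F$, so $N$ is a finitely generated elementarily free group, hence a regular NTQ group.

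To find such an $N$ one looks for a finitely generated subgroup $N$ with $G\le N\le M$ over which $M$ is a regular NTQ extension — a tower which, by the cyclic-abelian hypothesis, has no abelian floors — since being such an extension is the criterion for $N\preceq M$. Constructing $N$ is the heart of the matter: one analyzes the embedding $G\hookrightarrow M$ via the structure theory of limit groups and of elementarily free groups, so that a strict resolution of $G$ (together with its associated NTQ closure) and the way $G$ sits inside $M$ prescribe finitely many surface-type and free-rank floors to be filled in above $G$, and then one realizes these floors, with their retractions, by subgroups of $M$. This last step cannot be carried out by a naive transfer of existential sentences from $F$ to $M$, since subgroup membership is not first-order; it must instead be extracted from the full first-order theory of $M$ by running the $\mathrm{Hom}$-set analysis (Makanin--Razborov and JSJ machinery) relative to the inclusion $G\le M$, which is also what forces the number and size of the floors to be bounded, and hence $N$ to be finitely generated. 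This is the main obstacle, and it is the only place where one genuinely uses that $M$ is elementarily free rather than merely universally equivalent to $F$; the cyclic-abelian hypothesis is used to ensure the resulting tower is a \emph{regular} NTQ group, and without it the conclusion fails, as the example $\ZZ*(\ZZ\oplus\mathbb{Q})$ of \cite{KhaNat2020non} shows.
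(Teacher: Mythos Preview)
Your reduction is correct and matches the paper: the chain argument reduces Theorem~\ref{descr} to the statement that every finitely generated $G\le M$ is contained in a regular NTQ subgroup $N\le M$, which is exactly the paper's Theorem~\ref{main}. The issue is everything after that.

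First, a conceptual misstep. You propose to obtain $N$ by arranging $N\preceq M$, and to certify $N\preceq M$ by exhibiting $M$ as a hyperbolic tower over $N$. But a hyperbolic tower has finitely many floors, $M$ is not finitely generated, and the theorem ``tower over $H$ $\Rightarrow$ $H$ elementary'' is stated for torsion-free hyperbolic ambient groups; none of this applies to $M$. The paper does not attempt to make $M$ a tower over $N$ and never asserts $N\preceq M$; it constructs $N$ directly as a regular NTQ group sitting inside $M$.

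Second, and more seriously, your final paragraph is a description of the landscape rather than a proof. What the paper actually does to produce $N$ occupies Sections~3--5 and has several nontrivial ingredients you do not supply: (i) using the cyclic-centralizer hypothesis to express $A_E$-orbits by a first-order formula, so that transferring a suitable $\forall\exists$-sentence from $F$ to $M$ forces every freely indecomposable finitely generated subgroup to have a QH vertex in its JSJ (Lemmas~\ref{le:max-stand-AE}--\ref{le:exist-QH}); (ii) from another transferred sentence, a $K$-homomorphism $G\to M$ with kernel containing $R_{A_D}$ (Lemma~\ref{1.4}), iterated to produce a regular NTQ group $N$ together with a quotient of $N$ inside $M$ containing $G$ (Lemma~\ref{NTQ}); and (iii) a block-NTQ refinement procedure, with a termination argument by induction on the complexity of fundamental sequences (Proposition~\ref{AE}), that forces this quotient eventually to \emph{equal} $N$. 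Step~(iii) is the crux---without it one only gets a quotient of a regular NTQ group in $M$, not the group itself---and nothing in your outline addresses it. Your remark that the cyclic-abelian hypothesis ``ensures the resulting tower is regular'' also understates its role: it is what makes the $A_E$-orbit first-order expressible and what allows one to pass to maximal cyclic edge groups in the JSJ.
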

\noindent We prove this theorem in two cases: when the language is that of pure groups and when the language is expanded by adding generators for a fixed non-abelian free group $F\le M$.

\section{Preliminaries}

We use two special graphs of groups constructions, both involving surfaces: hyperbolic towers and JSJ decompositions. In the discussions below, note that a \emph{maximal boundary subgroup} of a compact connected surface $\Sigma$ with non-empty boundary is the cyclic fundamental group of a boundary component of $\Sigma$.

\begin{df}\label{def:surface-tp-vertex}
Let $\Gamma$ be a graph of groups with fundamental group $G$.
Then a vertex $v\in \Gamma$ is called a \emph{surface type vertex} if the following conditions hold:
\begin{itemize}
\item the vertex group $G_v=\pi_1(\Sigma)$ for a connected compact surface $\Sigma$ with non-empty boundary such that either the Euler characteristic of $\Sigma$ is at most $-2$ or $\Sigma$ is a once punctured torus;
\item For every edge $e\in \Gamma$ adjacent to $v$, the edge group $G_e$ embeds onto a maximal boundary subgroup of $\pi_1(\Sigma)$, and this induces a one-to-one correspondence between the set of edges adjacent to $v$ and the set of boundary components of $\Sigma$.
\end{itemize}
\end{df}

\begin{df}
Let $G$ be a group and $H$ be a subgroup of $G$. Then $G$ is a \emph{hyperbolic floor} over $H$
if $G$ admits a graph of groups decomposition $\Gamma$ where the set of vertices can be partitioned in two subsets $V_S,V_R$
such that
\begin{itemize}
\item each vertex in $V_S$ is a surface type vertex;
\item $\Gamma$ is a bipartite graph between $V_S$ and $V_R$;
\item the subgroup $H$ of $G$ is the free product of the vertex groups in $V_R$;
\item either there exists a retraction $r:G\to H$ that, for each $v\in V_S$, sends $G_v$ to a non-abelian image or $H$ is cyclic and there exists a retraction $r': G * \ZZ \to H * \ZZ$ that, for each $v\in V_S$, sends $G_v$ to a non-abelian image.
\end{itemize}

\end{df}

A hyperbolic tower is a sequence of hyperbolic floors and free products with finite-rank free groups and closed surface  groups of Euler characteristic at most $-2.$

\begin{df}
A group $G$ is a \emph{hyperbolic tower} over a subgroup $H$ if there exists a sequence $G=G^m>G^{m-1}>\ldots>G^0=H$ such that for each $i$, $0\leq i<m$,
one of the following holds:
\begin{itemize}
\item the group $G^{i+1}$ has the structure of a hyperbolic floor over $G^i$, in which $H$ is contained in one of the vertex groups that generate $G_i$ in the
 floor decomposition of $G^{i+1}$ over $G^i$;
\item the group $G^{i+1}$ is a free product of $G^{i}$ with a finite-rank free group or the fundamental group of a compact surface without boundary of Euler characteristic at most $-2$.
\end{itemize}
\end{df}

Note that a hyperbolic tower over a free group is equivalent to the class of groups known as \emph{regular NTQ groups} \cite{KhaMya2006elementary}. Hyperbolic towers completely characterize finitely generated elementary free groups as well as elementary embeddings among torsion-free hyperbolic groups.

\begin{theorem}[Kharlampovich-Myasnikov, Sela]
A finitely generated group $G$ is elementarily equivalent to a nonabelian free group $F$ if and only if $G$ is a nonabelian hyperbolic tower over $F$.
\end{theorem}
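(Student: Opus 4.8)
The plan is to prove the two implications separately. The forward (``if'') direction follows from a general principle: each elementary step in a hyperbolic tower is an elementary embedding, and elementary embeddings compose, so if $G$ is a nonabelian hyperbolic tower $G=G^m>\cdots>G^0=F$ then $F\preceq G$ and in particular $G\equiv F$. For a single hyperbolic floor $G^{i+1}$ over $G^i$ with retraction $r$ (or $r'\colon G^{i+1}*\ZZ\to G^i*\ZZ$ in the cyclic case), one shows $G^i\preceq G^{i+1}$ by a Merzlyakov-style argument: given a formula $\phi(\bar x,\bar a)$ with parameters $\bar a\in G^i$ and a witness $\bar g\in G^{i+1}$, one writes $\bar g$ through the bipartite graph-of-groups structure in terms of the surface-type vertex groups and the retracted part, and then uses the abundance of Dehn-twist automorphisms of each surface group (fixing its boundary subgroups) to ``correct'' $r(\bar g)$ into a genuine witness in $G^i$; at bottom this is the implicit function theorem (existence of formula solutions) over surface groups, using that $r$ sends each $G_v$ to a non-abelian image. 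The two remaining tower moves — free product with a free group of finite rank, or with a closed surface group of Euler characteristic $\le-2$ — likewise give elementary extensions (this is $F_n\preceq F_m$ for $2\le n\le m$ together with its closed-surface analogue). Composing yields $F\preceq G$.

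For the reverse (``only if'') direction, assume $G$ is finitely generated with $G\equiv F$. First one extracts structural consequences of the theory: $G$ is torsion-free, commutative-transitive, CSA, and satisfies the universal theory of $F$, hence $G$ is a limit group (fully residually free); one then upgrades this to show $G$ is word-hyperbolic, since failure of hyperbolicity would be witnessed by a $\forall\exists$-sentence false in $F$. Fix a nonabelian free subgroup $F\le G$. The heart of the argument is to equip $G$ with a hyperbolic floor structure over a proper subgroup containing $F$: using the Makanin–Razborov description of $\operatorname{Hom}(G,F)$ and Sela's shortening argument applied to the cyclic JSJ decomposition of $G$ (the JSJ construction recalled below), one produces a surface-type vertex together with a retraction of $G$ onto the complementary part sending the surface vertex group to a non-abelian image — the nonexistence of such a retraction would contradict a sentence of $\forall\exists$ type true in $F$. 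This exhibits $G$ as a hyperbolic floor over some $G'\supseteq F$, with $G'\equiv F$ (indeed $G'\preceq G$ by the ``if'' direction). One then induces on a complexity measure — e.g.\ a lexicographic combination of the first Betti number and the complexity of the JSJ decomposition — to show the peeling process terminates, after finitely many floor steps and splittings off of free or closed-surface factors, at $F$ itself; this exhibits $G$ as a nonabelian hyperbolic tower over $F$.

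The main obstacle is the ``only if'' direction, and within it precisely the step of extracting a surface-type vertex together with the required retraction from purely model-theoretic input, and then proving that the resulting peeling process terminates. This is exactly where Sela's tower theorem and the Kharlampovich–Myasnikov analysis of regular NTQ systems do the essential work: controlling the retractions produced by the shortening argument, and bounding the height of the tower, are the genuinely hard points and cannot be shortcut by the formal apparatus set up in the preliminaries.
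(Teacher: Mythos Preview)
The paper does not supply a proof of this theorem: it is stated in the Preliminaries section as a known result attributed to Kharlampovich--Myasnikov and Sela, with no argument given beyond the citation. So there is no ``paper's own proof'' to compare your proposal against.

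That said, your sketch is a reasonable outline of how the result is established in the cited literature, and the attribution of difficulty is accurate: the ``if'' direction is the softer implication (each tower step is an elementary extension), while the ``only if'' direction is where the deep work lies. A couple of caveats on your write-up, however. First, the statement as phrased in the paper concerns elementary equivalence in the pure group language, so $F$ is not a priori a distinguished subgroup of $G$; your reduction ``fix a nonabelian free subgroup $F\le G$'' is fine, but the conclusion is really that $G$ is a hyperbolic tower over \emph{some} free group (equivalently, a regular NTQ group), not over a prescribed $F$. Second, your claim that ``failure of hyperbolicity would be witnessed by a $\forall\exists$-sentence false in $F$'' is not how this goes: hyperbolicity of $G$ is obtained a posteriori from the tower structure (regular NTQ groups are hyperbolic), not by a direct first-order argument. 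Finally, your proposal is avowedly a plan rather than a proof---you correctly identify the shortening/retraction step and the termination argument as the hard points and then defer to the literature for them---so it would not stand on its own as a self-contained proof, which is presumably why the paper simply cites the result.
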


\begin{theorem}[Kharlampovich-Myasnikov, Sela]
Let $G$ be torsion-free hyperbolic. If $G$ is a hyperbolic tower over a nonabelian subgroup $H$, then $H$ is an elementary subgroup of $G$. \end{theorem}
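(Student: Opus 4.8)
The plan is to induct on the height $m$ of a tower $G = G^{m} > G^{m-1} > \cdots > G^{0} = H$, using transitivity of elementary embeddings. Each intermediate group $G^{i}$ is a retract of $G^{i+1}$ --- via the retraction $r$ in a floor step, or by killing the new free or surface factor in a free-product step --- hence a retract of the torsion-free hyperbolic group $G$, and therefore itself torsion-free hyperbolic; moreover $H \le G^{i}$ by the definition of a tower, so every $G^{i}$ is nonabelian, which means that in the floor steps only the first kind of retraction $r : G^{i+1} \to G^{i}$ occurs, never the second one with its auxiliary $\ZZ$ factor. It thus suffices to show that a single step is an elementary inclusion: that $H' \preceq G'$ whenever $G'$ is a hyperbolic floor over a nonabelian $H'$, or $G' = H' * L$ with $L$ free of finite rank or the fundamental group of a closed surface of Euler characteristic at most $-2$. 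Composing the resulting chain $G^{0} \preceq G^{1} \preceq \cdots \preceq G^{m}$ then gives $H \preceq G$, the base case $m = 0$ being vacuous.

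For the main (floor) case I would fix the floor decomposition $\Gamma$ of $G'$, bipartite over $V_S \sqcup V_R$, with $H'$ the free product of the $V_R$-vertex groups and a retraction $r : G' \to H'$ sending each surface vertex group to a nonabelian image, and then invoke the \emph{implicit function theorem over free groups} of Kharlampovich--Myasnikov (equivalently, Sela's machinery of formal solutions and test sequences). Via the standard reduction of first-order formulas to systems of equations and inequations with parameters, showing $H' \preceq G'$ comes down to the following lifting property: if a system over $H'$ has a solution in $G'$, it has one in $H'$, uniformly in the parameters. Such a solution is produced by constructing a \emph{formal solution}: on each surface vertex group one applies a Merzlyakov-type theorem for quadratic equations over a free group --- and this is exactly where the hypothesis that surface vertices have nonabelian image, together with the Euler-characteristic and once-punctured-torus conditions, enters, since it forces the relevant quadratic equations to be nondegenerate and lets a surface group with boundary be treated like a nonabelian free group --- while on the $V_R$-vertices one uses that $r$ restricts to the identity; the two are then patched along the edge groups, which are maximal boundary subgroups, to yield a map $G' \to H'$ fixing $H'$ and solving the system. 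The free-product steps are handled by an easier version of the same argument when $L$ is free, and by a more delicate Merzlyakov-type theorem for closed hyperbolic surface groups when $L$ is a closed surface group.

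The hard part will be precisely this floor-level lifting theorem: constructing the formal solutions on the surface vertices and making them compatible, along the maximal boundary subgroups, with the identity on the $V_R$-part, which demands careful control of the JSJ and surface combinatorics. Granting this technical core, the induction on $m$ together with transitivity of $\preceq$ completes the proof.
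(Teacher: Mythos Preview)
The paper does not give its own proof of this theorem: it is stated in the Preliminaries as a known result of Kharlampovich--Myasnikov and Sela and is quoted without argument, so there is nothing in the paper to compare your attempt against.

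For what it is worth, your outline is the standard one from the original sources: reduce by induction on the height of the tower and transitivity of $\preceq$ to a single floor or free-product step, and handle that step via formal solutions (the Kharlampovich--Myasnikov implicit function theorem, equivalently Sela's test-sequence machinery), using the nonabelian-image and Euler-characteristic hypotheses on the surface vertices to run the Merzlyakov-type argument. Your side remarks are also correct: retracts of hyperbolic groups are undistorted, hence quasiconvex, hence hyperbolic, so each $G^{i}$ is torsion-free hyperbolic; and since $H$ is nonabelian, the auxiliary-$\ZZ$ variant of the floor retraction never arises. One caution: the sentence ``showing $H' \preceq G'$ comes down to lifting solutions of systems from $G'$ to $H'$'' understates what is needed --- Tarski--Vaught requires this for arbitrary first-order formulas, not just positive-primitive ones, and the passage from general formulas to (boolean combinations of) equations is itself part of the deep machinery you are invoking, not a preliminary reduction.
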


\begin{theorem}\cite{Per2011elementary} 
Let $G$ be a torsion-free hyperbolic group. If $H$ be an elementary subgroup of $G$,
then $G$ is a hyperbolic tower over $H$.
\end{theorem}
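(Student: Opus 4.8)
The plan is to build the hyperbolic tower from the top down, by induction on a complexity measure of $G$. If $H=G$ there is nothing to prove (the empty tower), so assume $H\neq G$. One first shows that $H$ is finitely generated and quasiconvex in $G$, hence itself torsion-free hyperbolic, and that $H$ is non-abelian: an abelian torsion-free hyperbolic group is infinite cyclic, and the trivial group is not elementarily embedded in $\mathbb{Z}$, so $G$ --- and therefore $H\equiv G$ --- is non-abelian. The complexity measure should be a lexicographic pair $(\rho,\sigma)$, where $\rho$ is the Grushko rank of $G$ relative to $H$ and $\sigma$ is the total complexity (the sum of $-\chi$ over the surface-type vertices) of a cyclic JSJ decomposition of $G$ relative to $H$; it is arranged so that collapsing a surface-type vertex strictly decreases it, and well-foundedness follows from finite generation of $G$.

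First I would reduce to the case that $G$ is freely indecomposable relative to $H$. Writing the Grushko decomposition of $G$ relative to $H$ as $G=G_1*\cdots*G_k*F_r$ with $H\le G_1$, one uses the elementary embedding $H\hookrightarrow G$ to see that $H\preceq G_1$ and that each $G_i$ with $i\ge 2$ is a hyperbolic tower over the trivial group; then $G_2,\dots,G_k$ and $F_r$ are absorbed by the ``free product with a closed surface group or free group'' clauses in the definition of a tower, leaving $G_1$, of strictly smaller complexity, in place of $G$.

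Now suppose $G$ is freely indecomposable relative to $H$ and $H\neq G$, and let $\Lambda$ be the cyclic JSJ decomposition of $G$ relative to $H$; since $G$ is hyperbolic, $\Lambda$ exists, is finite, and $H$ is elliptic in it. The key input is that $H\preceq G$ yields a retraction $r\colon G\to H$: if $\langle \bar x,H\mid R(\bar x,\bar h)=1\rangle$ is a presentation of $G$ relative to $H$ that is finite (possible since $G$ is hyperbolic and $H$ is finitely generated), with $\bar x$ mapping to a generating tuple $\bar a$, then the sentence $\exists\bar x\,\bigwedge_i R_i(\bar x,\bar h)=1$ holds in $G$, hence in $H$, and any witness $\bar b\in H$ defines such a retraction. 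Adjoining finitely many inequations $w_j(\bar x,\bar h)\neq 1$ to this sentence and running the shortening argument of Rips--Sela and Bestvina--Feighn against the modular group $\operatorname{Mod}(\Lambda)$, I would prove the following dichotomy. If $\Lambda$ has no surface-type vertex, then the shortest such retraction $r$ is injective, and since $G$ is torsion-free hyperbolic and freely indecomposable it is co-Hopfian, so $H=G$, contradicting our assumption. Otherwise $\Lambda$ has a surface-type vertex and $r$ can be taken \emph{non-degenerate}, sending each surface-type vertex group of $\Lambda$ to a non-abelian subgroup of $H$; collapsing the surface-type vertices then exhibits $G$ as a hyperbolic floor over the subgroup $G'$ generated by the rigid vertex groups, with $H$ inside one of them. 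Establishing non-degeneracy of $r$---which is precisely where the full strength of the elementary hypothesis is spent, through the interaction of the chosen inequations with the shortening quotients---is the main obstacle.

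To close the induction, note that $G'$ is torsion-free hyperbolic (a vertex group of a cyclic splitting of $G$) and non-abelian (it contains $H$), so by the Kharlampovich--Myasnikov/Sela theorem on hyperbolic floors quoted above, $G'\preceq G$; transitivity of elementary embeddings applied to $H\le G'\le G$ with $H\preceq G$ and $G'\preceq G$ gives $H\preceq G'$. As the complexity of $G'$ is strictly smaller than that of $G$, the induction hypothesis makes $G'$ a hyperbolic tower over $H$, and stacking the floor $G$ over $G'$ on top of this tower exhibits $G$ as a hyperbolic tower over $H$. Apart from the non-degeneracy step, the delicate points are verifying that the relative JSJ decomposition and the Bestvina--Paulin and shortening machinery behave as in the finitely generated free-group case (using hyperbolicity of $G$), and the bookkeeping ensuring the complexity genuinely decreases at each floor so that the induction terminates.
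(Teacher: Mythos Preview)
The paper does not give its own proof of this theorem: it is stated in the Preliminaries with a citation to \cite{Per2011elementary} and then used as a black box. There is therefore nothing in the paper to compare your argument against.

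That said, your sketch is broadly in the spirit of Perin's actual proof, and the inductive scheme (Grushko reduction, relative JSJ, retraction from the elementary hypothesis, shortening to force a surface-type vertex with non-abelian image, then descend to $G'$) is the right architecture. Two comments on the sketch itself. First, your opening claim that one ``first shows that $H$ is finitely generated and quasiconvex in $G$'' is not something you can establish in advance by soft means; in Perin's argument finite generation of $H$ is an \emph{output} of the tower structure, not an input, and the retraction step does not need it (only finite presentability of $G$ is used, so that the relevant existential sentence has finitely many parameters from $H$). Asserting it upfront risks circularity. Second, your use of ``transitivity'' to get $H\preceq G'$ from $H\preceq G$ and $G'\preceq G$ is correct as stated (it follows formally since any formula with parameters in $H$ has the same truth value in $G'$ and in $G$, and in $H$ and in $G$), but calling it transitivity is a slight misnomer; it is rather a sandwich/Tarski--Vaught style observation. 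The genuinely hard step---arranging the retraction to be non-degenerate on the surface pieces via shortening---you have correctly flagged as the crux, and that is indeed where the bulk of Perin's work lies.
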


We now turn to JSJ decompositions, recalling definitions of Dehn twists and canonical automorphisms as well.

A \emph{one-edge cyclic splitting} of a group $G$ is a graph of groups with fundamental group $G$ that is either a segment (i.e., amalgamated product) or a loop (i.e., HNN extension), with the single edge group being infinite cyclic.
Given a graph $\Lambda$ of groups with fundamental group $G$, $H\le G$ is \emph{elliptic} if it is contained in a conjugate of a vertex group in $\Lambda$; analogously for an element of $G$.

\begin{df}
Let $\Lambda$ be a one-edge cyclic splitting of $G$ into either $A*_CB$ or $A*_C$, and let $\gamma\in C_G(C)$. The \emph{Dehn twist} of $\Lambda$ by $\gamma$ is the automorphism that restricts to identity on $A$ and to conjugation on $B$ or, respectively, restricts to identity on $A$ and sends the stable letter $t$ to $t\gamma$. A \emph{canonical automorphism} of $G$ is an automorphism generated by Dehn twists and inner automorphisms. In the case of $H$-automorphisms for some subgroup $H\le G$, canonical $H$-automorphisms must fix $H$ pointwise.
\end{df}

Let $\Lambda$ be a graph of groups with fundamental group $G$. A vertex of $\Lambda$ is \emph{quadratically hanging} or \emph{QH} if the corresponding vertex group is a fundamental group of a closed hyperbolic surface with boundary, each incident edge group is a finite-index subgroup of a maximal boundary subgroup, and, conversely, every maximal boundary subgroup contains an incident edge group as a finite-index subgroup. Note that this is weaker than Definition \ref{def:surface-tp-vertex} of surface type vertices.

\begin{df}
A cyclic \emph{JSJ decomposition} of a group $G$ is a graph of groups $\Lambda$ with fundamental group $G$ that is maximal in the sense that all possible one-edge cyclic splittings of $G$ are either splittings along edges in $\Lambda$ or can be found by splitting a QH vertex along a simple closed curve. A graph of groups $\Lambda$ with fundamental group $G$ is a \emph{JSJ decomposition of $G$ relative to $H\le G$} if $H$ is elliptic in a non-QH vertex of $\Lambda$ and $\Lambda$ is maximal only with respect to one-edge cyclic splittings in which $H$ is elliptic.
\end{df}

Note that any pair of noncompatible (intersecting)  splittings of $G$ comes from a pair of intersecting simple closed curves on one of the corresponding surfaces in the  JSJ decomposition of $G$.

We review some equivalent definitions of limit groups that we need for our purposes.
\begin{df}
    Suppose $G$ is a finitely generated group. Then $G$ is a \emph{limit group} if it satisfies one (i.e., all) of the following equivalent properties:
    \begin{itemize}
        \item $G$ is freely discriminated (i.e., given finitely many non-trivial elements $g_1,\ldots,g_n\in G$, there exists a homomorphism $\phi$ from $G$ to a free group such that $\phi(g_i)\ne1$ for $i=1,\ldots,n$);
        \item $G$ is universally equivalent to a non-abelian free group in the language without constants (i.e., $\Th_\forall(G)=\Th_\forall(F)$);
        \item $G$ is the coordinate group of an irreducible variety over a free group (see
\cite{KhaMya2006elementary} or \cite{KhaMya2010equations} for definitions).
    \end{itemize}
\end{df}
\begin{df}\label{def:res-lim-gp}
    Suppose $G$ is a finitely generated group containing a non-abelian free group $F\le G$. Then $G$ is a \emph{restricted limit group} if it satisfies one (i.e., all) of the following equivalent properties:
    \begin{itemize}
        \item $G$ is $F$-discriminated by $F$ (i.e., given finitely many non-trivial elements $g_1,\ldots,g_n\in G$, there exists an $F$-homomorphism $\phi:G\to F$ such that $\phi(g_i)\ne1$ for $i=1,\ldots,n$);
        \item $G$ is universally equivalent to $F$ in the language with $F$ as constants;
        \item $G$ is the coordinate group of an irreducible variety over $F$.
    \end{itemize}
\end{df}
\noindent We refer the reader to \cite{KhaMya2010equations} for more equivalent definitions of limit groups and a proof of their equivalence.


\section{JSJ decompositions of subgroups of an elementary free group without non-cyclic abelian subgroups}\label{sec:jsj-of-subgps}

Let $F$ be a finitely generated free group. We can consider the theory $\Th(F)$  with or without coefficients, i.e., with or without the generators of $F$ included as constants in the language. In the case where we have constants, all the groups elementarily equivalent to $F$ contain a designated copy of $F$.
Below, in the case of a coefficient-free system $S(X)=1$ we put $F_{R(S)}=F(X)/R(S)$ and in the case when there are coefficients $F_{R(S)}=(F\ast F(X))/R(S)$.

Let $M$ be a countably generated elementary free group in which all abelian subgroups are cyclic. Suppose the theory has coefficients. Then $F\leq M$. Let $G$ be a subgroup of $M$ generated by $F$ and finitely many elements. We will show (Corollary \ref{qh}) that for every free factor in the free decomposition of $G$ relative to $F$, the JSJ decomposition of such a factor is either trivial or has a QH subgroup.

Note first that in the language with constants, we have $\Th_\forall(F)\supseteq\Th_\forall(G)\supseteq\Th_\forall(M)=\Th_\forall(F)$, so by Definition \ref{def:res-lim-gp}, $G$ is a limit group and so there exists an irreducible system of equations $S(X)=1$ over $F$ such that $G=F_{R(S)}$. In the case without constants, $\Th_\forall(G)\supseteq\Th_\forall(M)$. In particular, for any reduced word $w(x,y)$, $M\models\forall x\forall y([x,y]\ne1\rightarrow w(x,y)\ne1)$, hence so does $G$. So for any non-commuting $a,b\in G$, $\ang{a,b}$ is free, hence $\Th_\forall(\ang{a,b})\supseteq\Th_\forall(G)\supseteq\Th_\forall(M)=\Th_\forall(\ang{a,b})$, so again we can write $G$ as the coordinate group of an irreducible variety. The group $G$ does not have non-cyclic abelian subgroups.

In the free decomposition of $G$ relative to $F$, one free factor contains $F$ and the others have trivial intersection with $F$. The factors that are isomorphic to closed surface groups and cyclic groups have trivial JSJ decompositions. We assume that at least one factor is not isomorphic to a surface group. We first wish to understand the JSJ decomposition of such a factor. Since the same argument gives a proof for both cases, when $F\leq G$ and when their intersection is trivial, we may assume that $G$ is freely indecomposable relative to $F$ and consider only one case, say $F \leq G$.

Then $G$ has a non-degenerate cyclic JSJ decomposition \cite{KhaMya2005effective} relative to $F$; denote it by $D$.
Let $B$ be a basis of $F$. Then $G=\ang{B,X\mid S}$ gives the canonical finite presentation of $G$ as the fundamental group of $D$.
Let $E_{r}$ be the set of edges between rigid subgroups. We will assume that the edge groups corresponding to edges in $E_r$ are maximal cyclic in $M$. If not, we add the roots of the maximal cyclic subgroups in $M$ to $G$, denote the new group by $\bar G$ and replace $D$ by the cyclic JSJ decomposition of $\bar G$.  Note that all abelian subgroups of $M$ are cyclic, which guarantees that there is a deepest root to add to $G$.  Moreover,  since $G$ inherits the splitting from the JSJ decomposition of $\bar G$, we do not have to add more roots to $\bar G$.

Let $A_E$ be the group of $F$-automorphisms (or simply automorphisms, in the case where $F \cap G=1$) of $G$ generated by Dehn twists along the edges of $D$. The group $A_E$ is abelian by \cite{Levitt}. Recall that two solutions $\phi_1$ and $\phi_2$ of the equation $S(X) = 1$ are {\em $A_E$-equivalent} if there is an automorphism $\sigma \in A_E$  such that $\phi_1\sigma = \phi_2$.

Recall that if $A$ is a group of canonical automorphisms of $G$, then the maximal standard quotient of $G$ with respect to $A$ is defined as the quotient $G/R_A$ of $G$ by the intersection $R_A$ of the kernels of all solutions of $S(X) = 1$ that are minimal with respect to $A$ (see \cite{KhaMya2005effective}  for details). Minimality is taken with respect to the length of a solution, where the length of a solution $\psi$ is defined as $|\psi|=\min_{h\in F}\max_{x\in X}|h\psi (x)h^{-1}|.$

By \cite[Theorem 9.1]{KhaMya2005effective} the maximal standard quotient
$G/R_{A_D}$ of $G$ with respect to the whole group of canonical
automorphisms $A_D$ is a proper quotient of $G$, i.e., there
exists a disjunction of systems of equations  (it is equivalent to one equation if we consider the theory with coefficients) $V(X) = 1$ such that $V \not \in R(S)$ and  all
minimal solutions of $S(X)=1$ with respect to the canonical group
of automorphisms $A_D$ satisfy  $V(X)=1.$  Notice that there is a finite family of maximal limit groups $L_1,\ldots , L_k$  such that every homomorphism $G/R_{A_D}\rightarrow F$ factors through one of them.  Now, compare
this with the following result.

\begin{lm} \label{le:max-stand-AE}
 The maximal standard quotient of $G$
 with respect to the group $A_E$ is equal to $G$, i.e.,
 the set of minimal solutions with respect
 to $A_E$ discriminates $G$.
 \end{lm}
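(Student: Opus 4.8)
The goal is to show $R_{A_E}=1$: that every nontrivial $g\in G$ is moved by some solution of $S(X)=1$ that has minimal length in its $A_E$-orbit (a minimal representative of an orbit always exists, since lengths are nonnegative integers). Two reductions come for free. First, if the JSJ decomposition $D$ is degenerate (a single vertex) — in particular if the relevant free factor is free, rigid, or a closed surface group — then $A_E=1$, every solution is $A_E$-minimal, and the assertion is just that $G$ is (restricted) fully residually free; so assume $D$ is nondegenerate. Second, if $g$ is elliptic in $D$ one is done immediately: after collapsing $D$ to a single edge, a Dehn twist along that edge is the identity on one side and conjugation by an element of $G$ on the other, so it carries every $D$-elliptic element to a $G$-conjugate of itself; hence for any $\sigma\in A_E$ and any $D$-elliptic $g$, the element $\sigma(g)$ is conjugate to $g$ in $G$. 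Thus if $\phi$ is any solution with $\phi(g)\neq1$ and $\psi=\phi\sigma$ is the $A_E$-minimal representative of its orbit, then $\psi(g)=\phi(\sigma(g))$ is conjugate in $F$ to $\phi(g)\neq1$, so $\psi$ witnesses $g\notin R_{A_E}$.

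The substance is therefore the elements that are hyperbolic in $D$, and for these I would produce a \emph{discriminating} sequence consisting of $A_E$-minimal solutions. One starts from a test sequence for $G$ adapted to $D$: a sequence $\phi_n\colon G\to F$ of strict solutions — injective on every rigid vertex group and every edge group of $D$, nondegenerate on every QH vertex group — whose defining parameters grow and are chosen generically enough that, for each radius $r$, the map $\phi_n$ is injective on the $r$-ball of $G$ once $n$ is large; such sequences exist and discriminate $G$ (part of the structure theory of limit groups; cf.\ \cite{KhaMya2005effective}). The key point is that the edge-gluing and twisting parameters of the $\phi_n$ can be chosen so that each $\phi_n$ is already $A_E$-minimal — no Dehn twist along an edge of $D$ shortens it — without destroying the injectivity-on-balls property; here one uses that all abelian subgroups of $M$ are cyclic and that the edge groups in $E_r$ are maximal cyclic in $M$, which pins down the centralizers of the edge groups and thereby controls the effect of the Dehn twists. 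Granting such a sequence $(\phi_n)$, it discriminates $G$ and each term is an $A_E$-minimal solution, so for every $g\in G\setminus\{1\}$ some $\phi_n$ with $\phi_n(g)\neq1$ shows $g\notin R_{A_E}$; hence $R_{A_E}=1$ and the maximal standard quotient with respect to $A_E$ is $G$.

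The main obstacle is exactly the claim in the middle of the previous paragraph: that a test sequence for $G$ which is generic enough to separate all of $G$ — in particular its $D$-hyperbolic elements — can simultaneously be taken to consist of $A_E$-minimal solutions. The tension between genericity and minimality is real but resolvable, because $A_E$-minimality is only a \emph{centering} condition within each twisting orbit and does not disturb the vertex-group data responsible for separation; one therefore arranges the edge parameters to be both generic and twist-centered, and making this precise requires the description of minimal solutions and their interaction with canonical automorphisms from \cite{KhaMya2005effective}. This is also precisely the source of the contrast with \cite[Theorem 9.1]{KhaMya2005effective}: the QH (surface) automorphisms comprising $A_D$ but not $A_E$ stretch the images of surface vertex groups in a way that cannot be centered, which forces $G/R_{A_D}$ to be a proper quotient, whereas the Dehn-twist part $A_E$ can always be centered, so $G/R_{A_E}=G$.
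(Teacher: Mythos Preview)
Your approach has a fundamental gap: it never uses the elementary equivalence $F\equiv M$ nor the embedding $G\le M$, and without these hypotheses the statement is simply false. Indeed, your test–sequence argument, if it could be completed, would show that $G/R_{A_E}=G$ for \emph{any} freely indecomposable limit group with cyclic centralisers. But take such a $G$ whose JSJ decomposition $D$ is nontrivial and has \emph{no} QH vertex. Then every one-edge cyclic splitting of $G$ is along an edge of $D$, so the full group $A_D$ of canonical automorphisms is generated by $A_E$ together with inner automorphisms; since the length function $|\psi|=\min_h\max_x|h\psi(x)h^{-1}|$ is conjugation-invariant, $A_E$-minimality and $A_D$-minimality coincide and $R_{A_E}=R_{A_D}$. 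By \cite[Theorem~9.1]{KhaMya2005effective} the quotient $G/R_{A_D}$ is proper, so $G/R_{A_E}$ is proper as well. In other words, the very claim you flag as ``the main obstacle'' --- that a discriminating test sequence can always be centred to be $A_E$-minimal --- is false in this generality, and the intuition that ``$A_E$-minimality is only a centering condition'' that ``does not disturb the vertex-group data'' is precisely what breaks down. (This is also why Lemma~\ref{le:exist-QH} is not vacuous: the lemma under discussion is what forces QH vertices to exist for subgroups of $M$.)

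The paper's proof injects the hypothesis $G\le M\equiv F$ in an essential way. One observes that, because centralisers of edge-group generators $c_e$ are cyclic, the relation ``$Z$ lies in the $A_E$-orbit of $X$'' is first-order definable. Assuming for contradiction that some $V\ne 1$ dies under every $A_E$-minimal solution, the $\forall\exists$-sentence ``every solution of $S(X)=1$ is $A_E$-equivalent to one satisfying $V=1$'' holds in $F$; by elementary equivalence it holds in $M$, and instantiating $X$ with the actual generators of $G$ inside $M$ produces $Z=X^{\sigma}$ for some $\sigma\in A_E$ (here the maximality of edge groups in $M$ is used) with $V(Z)=1$ in $M$, contradicting $V(X)\ne 1$ in $G\le M$. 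Your argument has no analogue of this transfer step, and cannot be repaired without one.
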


\begin{proof} Suppose, to the contrary, that the maximal standard
quotient $G/R_{A_E}$ is a proper quotient of $G$, i.e., there exists $V \in G$ such that $V \neq 1$ and $V^\phi = 1$ for any solution $\phi$ of $S$ minimal with respect to $A_E$. Recall that the group $A_E$ is generated by Dehn twists along the edges of $D$.  If $c_e$ is a given generator of the cyclic subgroup associated with the edge $e$, then we know how the Dehn twist $\sigma$ associated with $e$ acts on the generators from the set $X$. Namely, if $x \in X$ is a generator of a vertex group, then either $x^\sigma = x$ or $x^\sigma = c_e^{-1}xc_e$. Similarly, if $x \in X$ is a stable letter then either $x^\sigma = x$ or $x^\sigma = xc_e$. It follows that for $x \in X$ one has $x^{\sigma^n} = x$ or $x^{\sigma^n} = c_e^{-n}xc_e^n$ or $x^{\sigma^n} = xc_e^n$ for every $n \in \mathbb{Z}$. Now, since the centralizer of $c_e$ in $G$ is cyclic the following equivalence holds:
$$ \exists n \in \mathbb{Z} (x^{\sigma^n}  = z)
\Longleftrightarrow \left \{ \begin{array}{ll}
                            x= z & \mbox{if}\; x^\sigma  = x\\
                            \exists y ([y,c_e] = 1 \wedge y^{-1}xy = z) & \mbox{if}\; x^\sigma  = c_e^{-1}x c_e\\
                            \exists y([y,c_e]=1\wedge xy=z) & \mbox{if}\; x^\sigma=xc_e
                            \end{array} \right.
                            .$$

Similarly, since the group $A_E$ is finitely generated abelian one can write down a formula which describes the relation $$\exists \alpha \in A_E (x^{\alpha}  = z).$$ One can write the elements $c_e$ as words in the generators $X$, say $c_e = c_e(X)$. Now the sentence
$$\forall X\exists Y \exists Z \left(S(X) = 1 \rightarrow  \left(\bigwedge_{i = 1}^{|Y|}[y_i,c_i(X)]=1 \wedge
Z = X^{\sigma_Y} \wedge V(Z)=1\right)\right)$$
holds in the group $F$. Indeed, this formula tells one  that each solution of $S(X)=1$ is $A_E$-equivalent to a (minimal) solution $Z$ that satisfies the equation $V(Z)=1$. Hence this formula is true in $M$; in particular, it is true for $X\subset G$. The subgroup generated by $Z$ is isomorphic to the subgroup generated by $X$ in $M$, and $X$ and $Z$ satisfy the same relations.  But $V(X)\ne1$, so the formula cannot be true in $M$: contradiction.
\end{proof}

\begin{lm}\label{le:exist-QH}
    There exist QH subgroups  in $D$.
\end{lm}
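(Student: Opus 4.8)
The plan is to argue by contradiction, playing the cited Theorem~9.1 of \cite{KhaMya2005effective} off against Lemma~\ref{le:max-stand-AE}. Suppose $D$ has no QH subgroups. First I would record that, since $G$ (being a subgroup of $M$) has no non-cyclic abelian subgroups, $D$ also has no abelian-type vertices; hence every vertex of $D$ is rigid, and in particular every edge of $D$ joins two rigid vertices, so the edge set of $D$ coincides with $E_r$.

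The heart of the argument is then to show that, under this assumption, $A_D$ and $A_E$ induce the same notion of minimal solution of $S(X)=1$ --- which is all that is needed for the two maximal standard quotients to agree. By the definition of a (relative) cyclic JSJ decomposition, every one-edge cyclic splitting of $G$ in which $F$ is elliptic either is a splitting along an edge of $D$ or is obtained by cutting a QH vertex along a simple closed curve; with no QH vertices, only the former possibility occurs. Consequently the group $A_D$ of canonical $F$-automorphisms is generated by inner automorphisms together with Dehn twists along edges of $D$, i.e. along edges in $E_r$, which are exactly the generators of $A_E$. Inner automorphisms do not change the length $|\cdot|$ of a solution and hence do not affect which solutions are minimal, so $A_D$ and $A_E$ select the same set of minimal solutions; therefore $R_{A_D}=R_{A_E}$ and $G/R_{A_D}=G/R_{A_E}$. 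By Lemma~\ref{le:max-stand-AE} the latter equals $G$, whereas by \cite[Theorem~9.1]{KhaMya2005effective} the former is a proper quotient of $G$ --- a contradiction. Hence $D$ contains a QH subgroup.

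I expect the one genuinely delicate point to be the middle step: justifying that once QH and abelian vertices are absent the group of canonical $F$-automorphisms contributes nothing beyond $A_E$. This is the standard description of the modular group of a graph of groups (generated by inner automorphisms, edge Dehn twists, and the QH- and abelian-vertex automorphisms; see \cite{Levitt}), but it must be quoted in precisely the relative, cyclic-JSJ form used here, and one should be explicit about the irrelevance of inner automorphisms to the maximal standard quotient, since minimality is defined via the conjugation-invariant length $|\psi|=\min_{h\in F}\max_{x\in X}|h\psi(x)h^{-1}|$. The observation that $D$ has no abelian vertices, while easy, should also be stated, as it is exactly where the hypothesis that all abelian subgroups of $M$ are cyclic enters.
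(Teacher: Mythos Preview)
Your proposal is correct and follows essentially the same route as the paper: the paper's proof is the one-line observation that $G/R_{A_D}$ is proper (by \cite[Theorem~9.1]{KhaMya2005effective}) while $G/R_{A_E}$ is not (by Lemma~\ref{le:max-stand-AE}), hence $A_D\neq A_E$, hence $D$ has QH subgroups. You have unpacked the last implication more carefully---in particular your remarks that $D$ has no abelian vertices and that inner automorphisms leave the length (and thus the set of minimal solutions) unchanged are exactly the points the paper leaves implicit in passing from ``$A_D\neq A_E$'' to ``$D$ has QH subgroups''.
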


\begin{proof}
By \cite[Theorem 9.1]{KhaMya2005effective}, $G/R_{A_D}$ is a proper quotient of $G$,
 but by Lemma~\ref{le:max-stand-AE}, $G/R_{A_E}$ is not proper, so we have $A_D \neq A_E$, hence 
  $D$ has QH subgroups.
\end{proof}
\begin{cy} \label{qh} A JSJ decomposition of a finitely generated freely indecomposable subgroup of $M$  is either trivial or has a QH subgroup.
\end{cy}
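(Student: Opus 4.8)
The plan is to observe that Lemmas~\ref{le:max-stand-AE} and~\ref{le:exist-QH}, although stated for the particular group $G$ built from $F$ and finitely many elements, only use three features of $G$: that $G$ is a finitely generated limit group (a restricted limit group in the coefficient case), that $G$ is freely indecomposable (relative to $F$ when $F\le G$), and that $G$ has no non-cyclic abelian subgroups. So the first step is to check that an arbitrary finitely generated freely indecomposable $G'\le M$ has these features. Having only cyclic abelian subgroups is inherited from $M$. For the limit group property, universal sentences pass down from $M$ and across substructures, so $\Th_\forall(F)=\Th_\forall(M)\subseteq\Th_\forall(G')\subseteq\Th_\forall(\ang{a,b})=\Th_\forall(F)$ for any non-commuting $a,b\in G'$ (the subgroup $\ang{a,b}$ being free because $M$, hence $G'$, satisfies $\forall x\forall y([x,y]\ne1\to w(x,y)\ne1)$ for every reduced word $w$); the same computation in the language with $F\cap G'$ as constants shows $G'$ is a restricted limit group over $F\cap G'$ whenever that subgroup is non-abelian. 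Free indecomposability of $G'$ is the hypothesis, and it implies relative free indecomposability over $F\cap G'$.

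The second step is to dispose of the degenerate cases: if $G'$ is cyclic or the fundamental group of a closed surface, its JSJ decomposition is trivial and there is nothing to prove. Otherwise $G'$ admits a non-degenerate cyclic JSJ decomposition $D'$ (relative to $F\cap G'$, or absolutely if that subgroup is trivial or cyclic), and, exactly as in Section~\ref{sec:jsj-of-subgps}, we may first pass to the root closure $\bar{G'}$ of $G'$ in $M$ so that the edge groups between rigid vertices are maximal cyclic in $M$ --- this closure is again finitely generated because all abelian subgroups of $M$ are cyclic (so the chain of roots terminates), it introduces no non-cyclic abelian subgroup, and $G'$ inherits the resulting splitting. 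We may therefore run the arguments of Lemmas~\ref{le:max-stand-AE} and~\ref{le:exist-QH} verbatim with $G'$ and $D'$ in place of $G$ and $D$: the sentence asserting that every solution of the defining system is $A_E$-equivalent to a solution annihilating a fixed $V\ne1$ holds in $F$, hence in $M$, which forces the maximal standard quotient of $G'$ with respect to $A_E$ to be $G'$ itself; meanwhile \cite[Theorem~9.1]{KhaMya2005effective} gives that the maximal standard quotient with respect to the full canonical automorphism group $A_{D'}$ is a proper quotient; hence $A_{D'}\ne A_E$. Since $G'$ has no non-cyclic abelian subgroups, $D'$ has no non-cyclic abelian vertex groups, so every canonical automorphism of $G'$ not already in $A_E$ comes from a QH vertex; therefore $D'$ has a QH subgroup.

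The point I expect to require the most care is the bookkeeping around the root closure and around which JSJ decomposition is meant: one must verify that passing to $\bar{G'}$ preserves relative free indecomposability and that a QH vertex produced in the (relative) JSJ of $\bar{G'}$ descends to a QH vertex in the JSJ of $G'$, and one must confirm that in the present torsion-free setting with only cyclic abelian subgroups Theorem~9.1 of \cite{KhaMya2005effective} does deliver a proper quotient precisely when $D'$ is non-trivial, so that the stated dichotomy ``trivial JSJ versus QH subgroup present'' is exhaustive. These are the same verifications already implicit in Section~\ref{sec:jsj-of-subgps}, so the corollary follows by applying that reasoning to $G'$ rather than to a single free factor of $\ang{F,\dots}$.
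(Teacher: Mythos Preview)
Your proposal is correct and follows exactly the paper's approach: the corollary is stated without a separate proof because the preceding discussion in Section~\ref{sec:jsj-of-subgps} already explains that the arguments of Lemmas~\ref{le:max-stand-AE} and~\ref{le:exist-QH} go through for either of the two cases $F\le G$ or $F\cap G=1$, and hence for any finitely generated freely indecomposable subgroup of $M$. You have simply made explicit the verifications (limit-group status, absence of non-cyclic abelian subgroups, root closure, degenerate surface/cyclic cases) that the paper folds into its setup paragraphs before the lemmas.
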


\section{Proof of the main theorem}

 We continue the notation ($F$, $M$, $G$, $D$, etc.) from Section \ref{sec:jsj-of-subgps}. Let $K$ be the fundamental group of the graph of groups obtained from $D$ by removing all QH subgroups.

\begin{lm} \label{1.4} There is a $K$-homomorphism from $G$ into M with a non-trivial kernel containing $R_{A_D}.$

The quotient is also a quotient of one of the  limit groups $L_1,\ldots , L_k$ that are maximal limit quotients of $G/R_{A_D}$.
\end{lm}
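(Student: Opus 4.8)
The plan is a Merzlyakov-type argument parallel to the proof of Lemma~\ref{le:max-stand-AE}, but now exploiting the \emph{properness} of the maximal standard quotient with respect to the full canonical automorphism group rather than ruling it out. By \cite[Theorem 9.1]{KhaMya2005effective} the quotient $G/R_{A_D}$ is proper, so $R_{A_D}\neq 1$, and there are finitely many maximal limit quotients $L_1,\dots,L_k$ of $G/R_{A_D}$ through one of which every homomorphism $G/R_{A_D}\to F$ factors; fix presentations $L_i=F_{R(S\cup W_i)}$ with $W_i$ finite (limit groups are finitely presented). I would work from the tautological solution, namely the inclusion $\iota\colon G\hookrightarrow M$, which is a solution of $S(X)=1$ over $M$ with $X$ the fixed generating tuple of $G$, and try to replace $X$ by a tuple $Z$ over $M$ that still satisfies $S$, that coincides with $X$ on the generators lying in $K$, and that satisfies $W_{i_0}(Z)=1$ for some $i_0$.

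To produce such a $Z$ I would write a first-order sentence $\Psi$ (with constants from $F$, in the case of coefficients) of the form
$$\forall X\,\Big(S(X)=1\ \rightarrow\ \exists Z\,\exists Y\,\big(\Theta(X,Y,Z)\ \wedge\ \textstyle\bigvee_{i=1}^{k}W_i(Z)=1\big)\Big),$$
where $\Theta(X,Y,Z)$ expresses that $Z$ is obtained from $X$ by a \emph{canonical transformation supported away from $K$}: the generators belonging to $K$ are left unchanged; each Dehn twist along an edge $e$ of $D$ incident to a QH vertex is coded, as in the proof of Lemma~\ref{le:max-stand-AE}, by quantifying in $Y$ an element of the centralizer of the cyclic edge group --- which in $M$ is self-centralizing, by the hypothesis that abelian subgroups are cyclic --- and twisting the QH side; and, for each QH vertex with surface group $\pi_1(\Sigma_j)$, the sub-tuple of $Z$ corresponding to its generators is only required to satisfy the defining surface relation of $\Sigma_j$ with the prescribed (and unchanged) boundary words, which is a single fixed formula and therefore avoids any unbounded composition of mapping-class-group elements. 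The key point making $\Psi$ true in $F$ is that the passage to the maximal standard quotient $G/R_{A_D}$ can be realized by canonical automorphisms that fix $K$: by Sela's shortening argument every solution of $S(X)=1$ over $F$ is canonically equivalent to an $A_D$-minimal one, which factors through $G/R_{A_D}$ and hence through some $L_i$; and since $R_{A_D}$ is invariant under the canonical automorphisms (so that they descend to $G/R_{A_D}$), this reduction may be carried out using only the canonical automorphisms supported on the QH vertices and their incident edges, i.e.\ those fixing $K$ --- here Lemma~\ref{le:max-stand-AE} is exactly the statement that the remaining, non-QH, canonical automorphisms contribute nothing to the collapse. Thus $\Psi$ holds in $F$, hence in $M$ since $M\equiv F$, and applying $\Psi$ to $X=$ the generating tuple of $G\leq M$ yields a tuple $Z$ over $M$ with $S(Z)=1$, with $z_j=x_j$ for $x_j\in K$, and with $W_{i_0}(Z)=1$ for some $i_0$.

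The map $\phi\colon G\to M$, $x_j\mapsto z_j$, is then a homomorphism (as $S(Z)=1$), is a $K$-homomorphism (as $z_j=x_j$ on $K$), and factors through $L_{i_0}=F_{R(S\cup W_{i_0})}$ (as $W_{i_0}(Z)=1$ and membership in $R(S\cup W_{i_0})$ is a universal property transferring from $F$ to $M$); consequently $\ker\phi\supseteq\ker(G\to L_{i_0})\supseteq R_{A_D}$, this kernel is non-trivial because $R_{A_D}\neq1$, and the quotient $G/\ker\phi=\phi(G)$ is a quotient of $L_{i_0}$. I expect the main obstacle to lie entirely in the relative ($K$-fixing) bookkeeping behind $\Theta$ and the truth of $\Psi$ in $F$: one must check that restricting to canonical automorphisms that are trivial on $K$ still suffices to reach the maximal standard quotient (which is where the invariance of $R_{A_D}$ and Lemma~\ref{le:max-stand-AE} enter), and one must encode the surface automorphisms in a genuinely first-order way --- by requiring only that the new QH sub-tuples satisfy the same surface relations, rather than quantifying over mapping-class-group elements of unbounded complexity --- while keeping track of the conjugations introduced by the Dehn twists along the edges incident to the QH vertices.
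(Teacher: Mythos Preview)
Your overall strategy --- write a $\forall\exists$-sentence true in $F$, transfer it to $M$, and plug in the tautological tuple $X$ --- matches the paper. The gap is precisely where you yourself flag it: the claim that you may take the canonical reduction to $G/R_{A_D}$ while fixing $K$ pointwise is not justified, and Lemma~\ref{le:max-stand-AE} does not say what you need. That lemma says $R_{A_E}=1$, i.e.\ the edge Dehn twists alone do not collapse $G$; it does \emph{not} say that for every solution $\psi$ one can reach an $A_D$-minimal (hence $V$-satisfying) solution using only automorphisms that are the identity on $K$. In fact even the QH part of $A_D$ does not fix $K$ pointwise: an element $\gamma\in A_Q$ extended to $G$ conjugates the free factors of $K$ (this is exactly what the paper records as ``$\gamma|_K$ only conjugates the free factors of $K$''). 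And the edge twists you keep, along edges incident to QH vertices, need not fix $K$ either once $K$ lies on both sides of the edge in the Bass--Serre tree. So your sentence $\Psi$, with $z_j=x_j$ hard-wired for all $K$-generators, is not known to hold in $F$.

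The paper does not attempt to build $K$-fixing into the sentence. It writes the weaker sentence in which the $K$-generators $X_2$ are allowed to move to $Y_2=X_2^{\sigma_T}$ by an $A_E$-twist (expressible because $A_E$ is abelian and centralizers of edge elements are cyclic). Applying this in $M$ gives $\phi$ with $\phi|_K=\gamma\beta|_K$ for some $\beta\in A_E$, $\gamma\in A_Q$. The $K$-homomorphism is then recovered by \emph{post-processing inside $M$}: first set $\phi_1=\phi\beta^{-1}$ to undo the edge twist (so $\phi_1|_K=\gamma|_K$ conjugates the free factors $K_i$ by elements $g_i$), then take a Grushko decomposition $\phi_1(G)=P_1*\cdots*P_m$ and define $\nu$ conjugating each $P_j$ containing some $K_i^{g_i}$ back by $g_i^{-1}$, followed by $\eta$ killing the factors that contain no $K_i$. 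The composite $\phi_2=\eta\nu\phi_1$ is then genuinely the identity on $K$ and still kills $R_{A_D}$. This two-stage repair --- undoing $\beta$ and then the free-factor conjugations via a free decomposition of the image --- is the missing ingredient in your argument; without it (or an independent proof that the $K$-fixing sentence is already true in $F$) the proposal does not go through.
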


\begin{proof}
The generators $X$ in $G$ corresponding to the decomposition $D$ can be partitioned as $X=X_1\cup X_2$  where $B\cup X_2$ generate $K$. Since $G/R_{A_D}$ is a proper quotient,  the system of equations $V(X_1,X_2)=1$ defining $R_{A_D}$  is not in $R(S)$.

Any solution $\psi:G\to F$ can be precomposed by an automorphism $\sigma\in A_D$ such that $\psi\sigma$ solves $V$. Since $A_E\trianglelefteq A_D$, we can factor $\sigma=\gamma\beta$ where $\beta\in A_E$ and $\gamma\in A_Q$. Note $A_E$ is abelian and $\gamma|_K$ only conjugates the free factors of $K$. The existence of such an automorphism $\sigma$ for any solution $\psi$ is expressed by the following sentence:
\begin{multline*}
    \forall X_1\forall X_2\exists Y_1\exists Y_2\exists T \ S(X_1,X_2)=1\\
\rightarrow \left(\bigwedge_{i=1}^{|T|}[t_i,c_i(X_1,X_2)]=1 \wedge Y_2=X_2^{\sigma_T}\wedge S(Y_1,Y_2)=1\wedge V(Y_1,Y_2)=1\right).
\end{multline*}
It is true in $F$, and therefore also in $M$. If we take $X_1,X_2$ to be the generators of $G$ and denote the witnesses to $Y_1,Y_2,T$ in $M$ by $Y_1,Y_2,T$, then $M$ models
$$\bigwedge_{i=1}^{|T|}[t_i,c_i(X_1,X_2)]=1 \wedge Y_2=X_2^{\sigma_T}\wedge S(Y_1,Y_2)=1\wedge V(Y_1,Y_2)=1.$$

Let $G_1=\ang{G,Y_1,Y_2}$ and define a homomorphism $\phi:G\to G_1$ by $\phi(X_1)=Y_1$ and $\phi(X_2)=Y_2$, i.e., $\phi|_{K}=\gamma\beta|_{K}$. The subgroup of $M$ generated by $Y_2$ is isomorphic to $K$. Let $\phi_1=\phi\beta^{-1}$. Then $\phi_1|_{K}=\gamma|_K$ acts by conjugation on the free factors of $K$, i.e., if $K=K_1*\cdots*K_s$ is a free decomposition of $K$, then for each $i$, there exists $g_i\in G$ such that for all $k\in K_i$, $\phi_1(k)=k^{g_i}$.

Let $\phi_1(G)=P_1*\cdots*P_m$ be a free decomposition of $\phi_1(G)$.
Define a map $\nu$ on $\phi_1(G)$ as follows: if $P_j$ contains $K_i^{g_i}$ for some $i$, let $\nu(\ell)=\ell^{g_i^{-1}}$ for all $\ell\in P_j$; otherwise, $\nu$ acts identically on $P_j$.
Also, define $\eta$ on $\nu\phi_1(G)$ by $\eta\nu(P_j)=1$ if $P_j$ does not contain any factor $K_i$ and $\eta\nu(P_j)=P_j$ otherwise.
Then $\phi_2=\eta\nu\phi_1$ is the desired homomorphism.
\end{proof}

\begin{rk}
We can also view $\phi_2$ from a geometric perspective as follows. For each QH subgroup $Q$  in $D$ we associate a possibly trivial collection of simple closed curves on the corresponding surface that are mapped to the identity by $\phi _2$ as follows. The image of $Q$ inherits a free decomposition from $\phi _1(G)$.  This free decomposition lifts to a cyclic decomposition of $Q$. The curves correspond to the edges  of this cyclic decomposition. We construct another graph of groups $\Gamma$ obtained from $D$  by cutting the surfaces corresponding to QH vertex groups along this collection of curves, filling each curve with a disk and removing the surfaces that are not connected to any boundary components of the original QH vertex subgroups. Let $\bar G$ be the fundamental group of $\Gamma$. There is a natural epimorphism $\mu : G\rightarrow \bar G$.  We define $\tau: \bar G\rightarrow G_1$ on each free factor $\bar G_i$ of $\bar G$ that is associated with a connected component of $\Gamma$ by $\tau(\bar G_i)=\phi_2\mu^{-1}(\bar G_i)$. Then $\phi_2=\tau\mu$. Notice that no non-trivial simple closed curve on a QH vertex group of $\Gamma$ is mapped to the trivial element in $G_1$.
\end{rk}

We need the following lemma for the proof of Theorem \ref{main}.

\begin{lm} \label{NTQ}  There exist a fundamental sequences  $$G\xrightarrow{\pi_1}G_1\xrightarrow{\pi _2}\cdots\xrightarrow{\pi _n}G_n=F$$ in the canonical Hom-diagram for $G$ (see \cite{KhaMya2012quantifier}) such that for each $i=1,\ldots ,n$ there is a subgroup $H_i<M$ such that
\begin{enumerate}
\item $H_i$ is a quotient of $G_i$;
\item the JSJ decomposition $\Delta _i$ of $H_i$ (if $H_i$ is a free product, then $\Delta _i$ is a Grushko decomposition followed by JSJ decompositions of the free factors) contains a QH subgroup;
\item if $K_i$ is the fundamental group of the subgraph of groups of $\Delta _i$ containing all rigid vertex groups, then $K_i\leq H_{i+1}$;
\item there is a proper $K_i$-homomorphism $\beta _{i+1}: H_i\rightarrow H_{i+1}$  such that its kernel contains $R_{A_{\Delta _i}}$;
\item $H_{i}$ is freely indecomposable relative to  the isomorphic image of $K_{i-1}$ in $H_i$.  Each free factor of $H_i$ contains a conjugate of a free factor of $K_{i-1}$.
\end{enumerate}
If \begin{equation}\label{eq:1} G\xrightarrow{\beta_1} H_1\xrightarrow{\beta _2}\cdots \xrightarrow{\beta _n}H_n=F.\end{equation} is the corresponding fundamental sequence describing some homomorphisms from $G$ to a free group, then the NTQ group $N$ for this fundamental sequence is a regular NTQ group. Moreover, $M$ contains a quotient of $N$ containing $G$.
\end{lm}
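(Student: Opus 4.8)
The plan is to build, by induction on $i$, a branch $G=G_0\to G_1\to\dots\to G_n=F$ of the canonical Hom-diagram of $G$ together with the subgroups $H_i<M$ of items (1)--(5), using Lemma~\ref{1.4} as the inductive engine. Put $H_0:=G$, $\Delta_0:=D$, $K_0:=K$, and suppose $H_i<M$ has been produced with its JSJ decomposition $\Delta_i$ relative to the image of $K_{i-1}$ — a Grushko decomposition followed by relative JSJ decompositions of the freely indecomposable factors, when $H_i$ is not freely indecomposable relative to $K_{i-1}$. If $\Delta_i$ is trivial the induction stops; otherwise Corollary~\ref{qh}, applied to each freely indecomposable factor of $H_i$ (a finitely generated subgroup of $M$ with only cyclic abelian subgroups), gives item (2), and running the argument of Lemma~\ref{le:exist-QH} with $H_i$ in place of $G$ shows that at least one QH vertex of $\Delta_i$ carries a nontrivial interior Dehn twist. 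Applying Lemma~\ref{1.4} with $H_i,\Delta_i$ in place of $G,D$ yields a proper $K_i$-homomorphism $\beta_{i+1}\colon H_i\to H_{i+1}$ with $H_{i+1}<M$, with $\ker\beta_{i+1}\supseteq R_{A_{\Delta_i}}$ (item (4)), with $K_i\le H_{i+1}$ for $K_i$ the fundamental group of the subgraph of $\Delta_i$ spanned by the rigid vertices (item (3)), and with $H_{i+1}$ a quotient of one of the finitely many maximal limit quotients of $H_i/R_{A_{\Delta_i}}$; taking that quotient to be the next level $G_{i+1}$ of the branch gives item (1). The description $\beta_{i+1}=\eta\nu\phi_1$ in the proof of Lemma~\ref{1.4} shows that $H_{i+1}$ inherits a free decomposition each of whose factors contains a conjugate of a free factor of $K_i$, and is freely indecomposable relative to the image of $K_i$ away from such factors, which is item (5) at the next level. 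Since the canonical Hom-diagram of $G$ is a finite tree all of whose leaves are $F$, the induction stops with $H_n=F$, giving the fundamental sequence \eqref{eq:1}.

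To see that the NTQ group $N$ of \eqref{eq:1} is a regular NTQ group, I assemble it from the $\Delta_i$ as a tower $N=N^{\,n}>\dots>N^{\,0}=F$ and check it is a hyperbolic tower over $F$ (equivalent, by Section~2, to being a regular NTQ group). At the step modelled on $\Delta_i$, the surface-type vertex set $V_S$ consists of the QH vertices of $\Delta_i$: after the root-adding surgery of Section~\ref{sec:jsj-of-subgps} every incident edge group is maximal cyclic, hence a full boundary subgroup, so a QH vertex is either a pair of pants (no interior Dehn twist, so absorbable into the rigid part) or a surface-type vertex as in Definition~\ref{def:surface-tp-vertex}, and the previous paragraph guarantees a surface-type vertex survives, so $V_S\ne\emptyset$; the free and closed-surface free factors of $H_i$ flagged in Section~\ref{sec:jsj-of-subgps} furnish the free-product steps of the tower, and, collapsing edges between rigid vertices, the rigid part $K_i$ appears as the free product of the surviving vertex groups and plays the role of $N^{\,i}$. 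The one point needing the ambient $M$ is that $N^{\,i+1}$ is a \emph{hyperbolic floor} over $N^{\,i}$: there must be a retraction $r\colon N^{\,i+1}\to N^{\,i}$, or $r'\colon N^{\,i+1}\ast\ZZ\to N^{\,i}\ast\ZZ$ when $N^{\,i}$ is cyclic, carrying each surface-type vertex group to a non-abelian image. This retraction is induced by $\beta_{i+1}$: it fixes $K_i=N^{\,i}$ pointwise, its kernel contains $R_{A_{\Delta_i}}$, and by the Remark after Lemma~\ref{1.4} it kills no nontrivial simple closed curve on any QH vertex, so the image of each surface-type vertex group is a quotient of a surface group in which no simple closed curve dies, hence non-abelian. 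No non-cyclic abelian vertex group occurs, by hypothesis on $M$. Thus every step is a hyperbolic floor (or a free-product step) and $N$ is a regular NTQ group.

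For the last assertion, define $\psi\colon N\to M$ level by level: the inclusion $F\hookrightarrow M$ on $N^{\,0}$, and the identification of each surface-type vertex group of the step modelled on $\Delta_i$ with the corresponding QH subgroup of $H_i\subset M$. These partial maps agree on overlaps because, for every $i$, $K_i$ sits inside both $H_i$ and $H_{i+1}\subseteq M$ as literally the same subgroup (as $\beta_{i+1}$ fixes $K_i$ pointwise), so the edge relations of $N$ are respected and the partial maps glue to a homomorphism $N\to M$. Its image contains $F$, the QH vertex groups of $D$, and $K_0$, hence contains $G$; so $M$ contains the quotient $N/\ker\psi$ of $N$, and this quotient contains $G$.

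The main obstacle is the hyperbolic-floor property in the second paragraph: upgrading the homomorphism $\beta_{i+1}$ from Lemma~\ref{1.4} — a priori only $K_i$-fixing with kernel containing $R_{A_{\Delta_i}}$ — to a genuine retraction sending every surface-type vertex group to a non-abelian image. This is exactly where the ambient elementary free group $M$ enters, through Lemma~\ref{1.4} and the non-degeneracy statement of the Remark following it, and it is also where one must keep the bookkeeping of free factors (item (5)) and of maximal-cyclic edge groups coherent between consecutive levels so that the floors splice into a single regular NTQ system.
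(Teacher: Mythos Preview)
Your approach --- iterating Lemma~\ref{1.4} to build the $H_i$ inside $M$ --- is exactly the paper's, though the paper's proof is two lines (set $\beta_1=\phi_2$, $H_1=\phi_2(G)$, and invoke the descending chain condition on proper epimorphisms of limit groups for termination) while you supply the detailed verification of items (1)--(5), the hyperbolic-tower structure of $N$, and the explicit map $N\to M$. Your termination via finiteness of the canonical Hom-diagram is a close cousin of the paper's DCC argument, and the extra care you take with the retraction and non-abelian images of QH vertices is material the paper leaves implicit.
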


\begin{proof} We set $\beta _1=\phi _2$ and $H_1=\phi _2(G)$. Since every chain of proper limit quotients of a limit group is finite, the statement of the lemma can be proved using Lemma \ref{1.4} inductively.
\end{proof}

\begin{theorem}  \label{main} For every finitely generated subgroup $G$ of $M$ (in the case with coefficients, $G$ must contain $F$)  there exists a regular NTQ group that is a subgroup of $M$ and contains $G$.
\end{theorem}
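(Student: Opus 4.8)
The plan is to feed $G$ into Lemma~\ref{NTQ} and then upgrade its conclusion from ``quotient of an NTQ group'' to ``NTQ group'' by a descent argument; the argument does not depend on whether the language has coefficients. We may assume $G$ is non-abelian, since otherwise we replace $G$ by $\langle G,a,b\rangle$ for any non-commuting $a,b\in M$ (in the coefficient case $G\supseteq F$ is already non-abelian). Lemma~\ref{NTQ} gives a regular NTQ group $N$ and an epimorphism $\pi\colon N\twoheadrightarrow\bar N$ onto a subgroup $\bar N\le M$ with $G\le\bar N$. If $\pi$ is an isomorphism we are done: $\bar N\cong N$ is a regular NTQ subgroup of $M$ containing $G$. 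If $\pi$ is proper, note that $\bar N$ is again a finitely generated subgroup of $M$ (a quotient of the finitely generated group $N$) which contains $F$ in the coefficient case (as $\bar N\supseteq G\supseteq F$) and whose abelian subgroups are all cyclic; so, by the discussion opening Section~\ref{sec:jsj-of-subgps}, $\bar N$ is a limit group and Lemma~\ref{NTQ} applies to $\bar N$. It produces a regular NTQ group $N'$ and an epimorphism $\pi'\colon N'\twoheadrightarrow\bar{N'}$ with $G\le\bar N\le\bar{N'}\le M$.

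The heart of the matter is that $N'$ is a \emph{proper} quotient of $N$. Since $N$ is NTQ, it is the coordinate group of its own tower, and a fundamental sequence for $\bar N$ of the kind produced in Lemma~\ref{NTQ} can be arranged to factor through that tower; the associated NTQ groups then carry a compatible epimorphism $N\twoheadrightarrow N'$. This epimorphism is proper because passing from $N$ to $\bar N$ destroys a non-trivial element: in the language of the Remark after Lemma~\ref{1.4} it collapses a non-trivial simple closed curve on a QH vertex of the top-level decomposition of $N$, or a non-trivial element of a rigid vertex group there, and this strictly lowers the complexity of the tower at that level. Iterating, we obtain a chain $N\twoheadrightarrow N'\twoheadrightarrow N''\twoheadrightarrow\cdots$ of proper epimorphisms between regular NTQ groups, all of them limit groups. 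Since every chain of proper limit quotients of a limit group is finite (the fact already invoked in the proof of Lemma~\ref{NTQ}), the process stops after finitely many steps, and it can only stop at a stage where the epimorphism onto the relevant subgroup of $M$ is an isomorphism. That subgroup is then a regular NTQ group contained in $M$ and containing $G$. (This is exactly the input needed to deduce Theorem~\ref{descr}, by writing the countable group $M$ as an increasing union of such subgroups.)

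The step I expect to be the genuine obstacle is exactly the claim of strict descent — that re-running the machinery on $\bar N$ yields an NTQ group \emph{properly} below $N$ rather than, a priori, a fresh copy of $N$. Making this rigorous calls for a careful analysis of how the cyclic JSJ decomposition $D$, the canonical automorphism group $A_D$, and the maximal standard quotient of Lemma~\ref{le:max-stand-AE} transform under $\pi$, and for a check that the curves killed by $\pi$ (in the sense of the Remark after Lemma~\ref{1.4}) cannot be entirely invisible to the fundamental-sequence construction applied to $\bar N$. If that turns out to be delicate, an alternative is to prove directly that $\pi\colon N\to\bar N$ is injective: $N$ is a regular NTQ group, hence elementarily equivalent to $F$ and, in the coefficient case, contains $F$ as an elementary subgroup; the floor-by-floor data of the tower of $N$ over $F$ (the retractions, surface subgroups, and boundary identifications) is finite and can be encoded by sentences over $F$ whose satisfiability in $F$ transfers to $M$, thereby exhibiting an isomorphic copy of $N$ inside $M$ that contains $G$.
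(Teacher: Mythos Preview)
Your proposal has a genuine gap at precisely the point you yourself flag: the descent step. The claim that re-running Lemma~\ref{NTQ} on $\bar N$ produces a regular NTQ group $N'$ that is a \emph{proper quotient of $N$} is not justified and is, in general, false as stated. Lemma~\ref{NTQ} applied to $\bar N$ works from the canonical Hom-diagram of $\bar N$, not of $N$; there is no natural map $N\to N'$ at all. The subgroups of $M$ you build form an \emph{increasing} chain $G\le\bar N\le\bar{N'}\le\cdots$, and the NTQ covers of larger and larger subgroups have no reason to shrink. The finiteness of proper chains of limit quotients is therefore simply inapplicable: you never exhibit such a chain. Your heuristic that ``$\pi$ collapses a simple closed curve, and this strictly lowers the complexity of the tower'' conflates the complexity of $\bar N$ with that of $N'$; the latter is a fresh tower built over a new base and can a priori recover or exceed the complexity of $N$.

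The paper's proof is not a naive iteration of Lemma~\ref{NTQ}. It introduces \emph{block-NTQ groups} and tracks, level by level, how the additional relations $U_i=1$ interact with the canonical fundamental sequence, using a complexity $(\dim+\mathrm{factors},\ (size(Q_1),\ldots,size(Q_m)))$ on QH pieces. The key technical input is Lemma~\ref{ch}: after moving stable QH subgroups to the bottom, either the new top level coincides with the old one or its size strictly drops. This is what forces termination (Proposition~\ref{AE}), via induction on the complexity of the top level; the argument is substantially more delicate than ``limit quotients terminate'' and cannot be bypassed.

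Your alternative route---encoding the tower of $N$ over $F$ by first-order sentences and transferring from $F$ to $M$---also does not close the gap. One can indeed express, with parameters in $F$, the existence of elements satisfying the defining relations of $N$ together with the retraction/surface data of each floor; but satisfiability of that sentence in $M$ only yields witnesses generating a \emph{quotient} of $N$, since ``no further relations hold among these elements'' is an infinite conjunction of inequalities and is not expressible by a single first-order sentence. That is exactly what Lemma~\ref{NTQ} already gave you, so nothing is gained.
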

The proof of the theorem will be given in the next section. To give an idea,  notice  that by Lemma \ref{NTQ}, $M$ contains a quotient of the regular NTQ group $N$ such that this quotient contains $G$.   Denote this quotient by $G_1$. If $G_1$  is isomorphic to $N$ the theorem is proved. Otherwise we will construct a regular NTQ group $\bar N_1$  containing  $G_1$  and such that $M$ contains a quotient $G_2$ of $\bar N_1$, moreover $G_1\leq G_2$. If $G_2=\bar N_1$, the theorem is proved. Otherwise, we construct a regular NTQ group $\bar N_2$ such that $G_2\leq \bar N_2$ and $M$ contains a quotient $G_3$ of $\bar N_2$.  We will show that the construction can be designed such a way that it stops, namely on some step $i$, $G_{i+1}=\bar N_{i}.$

\begin{proof}[Proof of Theorem \ref{descr}]
Let $G$ be the subgroup of $M$ generated by $F$ and the first $n$ generators of $M$.  By Theorem \ref{main} there is  a regular NTQ group that is a subgroup of $M$ and contains $G$.  Denote this NTQ group by $F_1$.
It is the NTQ group over the base group $F$.
Take now the subgroup $\Gamma$ of $M$ generated by $F_1$ and next $n$ generators of $M$.  There is  a regular NTQ group $F_2$ over $F_1$ that is a subgroup of $M$ and contains $\Gamma$.

Suppose, by induction, that  a regular NTQ group $F_i\leq M$ containing the first $ni$ generators of $M$ has been already constructed. Let $\Gamma _i$ be the subgroup of $M$ containing $F_i$ and the first $n(i+1)$ generators of $M$. We can construct  a regular NTQ group $F_{i+1}$ over $F_i$ containing $\Gamma _i$. Then $M$ is the union of the  chain $F\prec F_1 <\ldots<F_i<\ldots $.
\end{proof}

Similarly to Theorem \ref{descr}, one can show the following using a completely analogous proof:
\begin{theorem} \label{descr1} Let $H$ be a non-cyclic torsion-free hyperbolic group. Let $M\ge H$ be a countable group that is elementarily equivalent to $H$ in the language with constants for the generators of $H$, and suppose all finitely generated abelian subgroups of $M$ are cyclic. Then $M$ is a union of a chain of hyperbolic towers over $H$.
\end{theorem}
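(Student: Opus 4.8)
The plan is to repeat the proof of Theorem~\ref{descr} essentially verbatim, with the torsion-free hyperbolic group $H$ playing the role that the free group $F$ played there. Enumerate a generating set of $M$; at stage $i$ let $\Gamma_i$ be the subgroup of $M$ generated by the hyperbolic tower $F_i$ constructed so far together with the next batch of generators, and prove the analogue of Theorem~\ref{main}: every finitely generated subgroup $G$ of $M$ containing $H$ is contained in a hyperbolic tower over $H$ which is itself a subgroup of $M$. Feeding this into the induction produces a chain $H<F_1<F_2<\cdots$ with $\bigcup_i F_i=M$. Each $F_i$ is a finitely generated hyperbolic tower over the torsion-free hyperbolic group $H$, hence is itself torsion-free hyperbolic (apply the Bestvina--Feighn combination theorem to the successive floor decompositions; amalgams and HNN extensions of torsion-free groups over cyclic subgroups stay torsion-free), and each inclusion $F_i\le F_{i+1}$ is elementary by the Kharlampovich--Myasnikov--Sela theorem on hyperbolic towers over non-abelian subgroups, noting that a non-cyclic torsion-free hyperbolic group is non-abelian. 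So the chain is an elementary chain and one recovers $M\equiv H$ consistently.

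To run the argument one replaces each ingredient of Section~\ref{sec:jsj-of-subgps} and of the proof of Theorem~\ref{main} by its relative analogue over $H$, all of which are available in the literature on equations over torsion-free hyperbolic groups. Since the language carries constants for the generators of $H$ and $H\le G\le M$, we have $\Th_\forall(H)\supseteq\Th_\forall(G)\supseteq\Th_\forall(M)=\Th_\forall(H)$, so $G$ is $H$-discriminated by $H$, i.e.\ $G$ is a restricted limit group over $H$, the coordinate group of an irreducible variety over $H$; moreover $G$ has no non-cyclic abelian subgroups. After a Grushko decomposition relative to $H$ we may assume $G$ is freely indecomposable relative to $H$, so $G$ carries a non-degenerate cyclic JSJ decomposition $D$ relative to $H$, with edge groups taken maximal cyclic in $M$ exactly as before (the existence of a deepest root again uses that abelian subgroups of $M$ are cyclic). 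Lemmas~\ref{le:max-stand-AE} and~\ref{le:exist-QH} and Corollary~\ref{qh} then go through unchanged: the sentence asserting that every solution of $S(X)=1$ over $H$ is $A_E$-equivalent to a minimal one satisfying $V(X)=1$ holds in $H$, hence in $M$, and combined with the cyclicity of centralizers of edge elements in $M$ this shows the maximal standard quotient of $G$ with respect to $A_E$ equals $G$; comparing with the relative version of \cite[Theorem~9.1]{KhaMya2005effective} forces $D$ to contain QH subgroups.

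With this in hand, Lemmas~\ref{1.4} and~\ref{NTQ} translate directly: one obtains a $K$-homomorphism $G\to M$ with non-trivial kernel containing $R_{A_D}$, and, iterating along a fundamental sequence in the canonical Hom-diagram of $G$ over $H$ (Makanin--Razborov diagrams over torsion-free hyperbolic groups, \cite{KhaMya2012quantifier}), one produces a hyperbolic tower $N$ over $H$ such that $M$ contains a quotient of $N$ containing $G$. The closing-up iteration sketched for Theorem~\ref{main} then applies: if that quotient is not all of $N$ one builds a strictly larger hyperbolic tower over $H$ again containing, inside $M$, a quotient of it, and the process terminates because any chain of proper limit quotients of a limit group over $H$ is finite. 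This yields the relative analogue of Theorem~\ref{main}, and the theorem follows.

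The main obstacle is not the model-theoretic transfer, which is literally the same argument, but checking that the geometric and algorithmic machinery of \cite{KhaMya2005effective} and \cite{KhaMya2012quantifier}, developed over a free base, genuinely has the stated relative analogue over an arbitrary torsion-free hyperbolic base $H$: the existence and non-degeneracy of the cyclic JSJ decomposition relative to $H$, the properness of the maximal standard quotient with respect to the full canonical automorphism group, the canonical Hom-diagram over $H$, and the descending chain condition on proper limit quotients over $H$. These are by now standard (Sela's ``Diophantine geometry over groups'' series and the work of Kharlampovich--Myasnikov), and the torsion-freeness of $H$ together with the hypothesis on $M$ guarantees that no torsion and no non-cyclic abelian subgroups enter the relevant vertex and edge groups, so the proof is indeed completely analogous to that of Theorem~\ref{descr}.
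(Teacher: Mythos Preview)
Your proposal is correct and matches the paper's approach exactly: the paper itself gives no separate proof for Theorem~\ref{descr1}, merely stating that it follows by a ``completely analogous proof'' to Theorem~\ref{descr}. Your write-up is a faithful (and more detailed) unpacking of what that analogy entails, including the replacement of $F$ by $H$ throughout Sections~\ref{sec:jsj-of-subgps}--5 and the appeal to the relative machinery over torsion-free hyperbolic groups.
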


\section{Proof of Theorem \ref{main}}
In this section  we will prove Theorem \ref{main}.
The  proof is, basically, the adaptation for our needs of the construction
of the $\forall\exists$-tree from \cite{KhaMya2006elementary} or \cite{KhaMya2012quantifier}.
It is written by the first author.

{\bf The strategy of the proof.}

In the free decomposition of $G$ relative to $F$, one free factor contains $F$ and the others have trivial intersection with $F$. If all factors are isomorphic to closed surface groups and cyclic groups, we are done because in both cases $G$ is a regular NTQ group as required. Therefore we assume that at least one factor is not isomorphic to a surface group.

 Let $G=F_{R(U_0)}$ be a limit group generated by elements $X$ that is a subgroup of the elementary free group $M$.  We will use the following strategy. Let $N$ be the  regular NTQ group from Lemma \ref{NTQ}. If $N\leq M$ the theorem is proved. Suppose that $N\not \leq M$.   Then $M$ contains  the  proper quotient of  $N$ corresponding to the system of equations $U_1=1$,  and $G_1=F_{R(U_1)}$ contains $G$. Denote  $N=N_0=\bar N_0$. We say that $N_0, \bar N_0$ are constructed on the {\bf initial step}.

\begin{df} We call  fundamental sequences satisfying the first and second restrictions introduced in \cite{KhaMya2006elementary} ( see also \cite{KhaMya2012quantifier}) {\em well-aligned} fundamental sequences.\end{df}

 We perform the {\bf first step} and construct the so called block-NTQ groups $N_1$ describing all homomorphisms $G\rightarrow F$ that factor through $\bar N_0$, satisfy $U_1=1$ and belong to well-aligned fundamental sequences. Such block-NTQ group $N_1$ contains as a subgroup a regular NTQ group $\bar N_1$ containing $G$.
Since this can be described by first order sentences, we  conclude the same way as we did in Lemma \ref{NTQ} that $M$ contains a quotient of at least one of the groups $\bar N_1$ containing $G$.
If this quotient is isomorphic to $\bar N_1$ we stop, and the theorem is proved.

Otherwise $G_2=F_{R(U_2)}$ is a proper quotient of $\bar N_1$ , we go to the {\bf second step}
and construct refined block-NTQ systems $N_2$ describing all homomorphisms $G\rightarrow F$ that factor through $N_1$ and $\bar N_1$, satisfy $U_2=1$ and belong to well-aligned fundamental sequences.

We will prove that this construction stops. The construction of the ``refined'' NTQ systems is quite complicated but it  is very similar to the construction of a branch of the $\exists\forall$-tree in
the proof of the decidability of the theory of a free group \cite{KhaMya2006elementary}, \cite{KhaMya2012quantifier}.

We will represent the construction as a path. We assign to each vertex of the path some set of homomorphisms $G\rightarrow F$ and a regular NTQ group (hyperbolic tower) containing $G$ as a subgroup.
We assign the set of all homomorphisms  $G\rightarrow F$ to the initial vertex $w_{0}$.

If $Q$ is a QH subgroup, let $size (Q)=(genus (Q), |\chi (Q)|).$  Pairs are ordered left lexicographically. Let  complexity of the JSJ decomposition be the tuple of complexities
$$(size (Q_1),\ldots ,size (Q_k)),$$
where $size (Q_1)\geq \ldots \geq size (Q_k).$ We order tuples left lexicographically.
Similarly the {\em regular size} of an NTQ group is defined.

It is convenient to define  the notion of complexity of a fundamental sequence ($Cmplx (Var_{fund})$) at follows:
$$Cmplx(Var_{\rm fund}) =$$ $$(dim (Var_{\rm fund}) + factors (Var_{\rm fund}), (size (Q_1),\ldots ,size (Q_m))),$$
where $factors (Var_{\rm fund})$ is the number of freely indecomposable, non-cyclic terminal factors (this component only appears in fundamental sequences relative to  subgroups),  and   $(size (Q_1),\ldots, size (Q_m))$ is the regular size of the system. The complexity is a tuple of numbers which we compare in the left lexicographic order.  The number $dim (Var_{\rm fund}) + factors (Var_{\rm fund})$ is called the Kurosh rank of the fundamental sequence.

Let $K$ be a finitely generated group. Recall that any family of homomorphisms $\Psi=\{\psi _i:K\rightarrow F \}$ factors through a finite set of maximal fully residually free groups $H_1,\ldots , H_k$ that all are quotients  of $K$.  We first take a quotient $K_1$ of $K$ by the intersection of the kernels of all homomorphisms from $\Psi$, and then construct maximal fully residually free quotients $H_1,\ldots , H_k$ of $K_1$. We say that $\Psi$ {\em discriminates} groups  $H_1,\ldots , H_k$, and that each $H_i$ is {\em a fully residually free group discriminated by $\Psi .$}

Let $S$ be a compact surface. Given a homomorphism $\phi :\pi _1(S)\rightarrow H$
a family of pinched curves is a collection $\mathcal C$ of disjoint, non-parallel, two-sided
simple closed curves, none of which is null-homotopic, such that the fundamental
group of each curve is contained in ker $\phi$ (the curves may be parallel to a boundary component).
The map $\phi$  is {\em non-pinching} if there is no pinched curve: $\phi$ is injective in restriction to
the fundamental group of any simple closed curve which is not null-homotopic.

\subsection{Initial step} We can
construct algorithmically a finite number of NTQ systems  corresponding to branches $b$ of the canonical $Hom$-diagram described in \cite[Section 2.2]{KhaMya2012quantifier}. For at least one of them we can construct the well aligned fundamental sequence (\ref{eq:1}) and the regular NTQ group $N$,  denote this branch by $b$  and the system corresponding to $N$  by $S(b)=1$. ($S(b):S_1(X_1,\ldots ,X_n)=1,\ldots ,S_n(X_n)=1$). Then $N=F_{R(S(b))}$
 For this fundamental
sequence $Var_{\rm
fund}(S(b))$ and regular NTQ group  $N=N_0=\bar N_0=F_{R(S(b))}$ we assign a vertex $w_{1}$ of the path. We draw an edge from
 vertex $w_{0}$  to each vertex $w_{1}$.

We say that a fundamental sequence is constructed modulo some subgroups of the coordinate group  if these subgroups are elliptic in the JSJ decompositions on all levels in the construction of this fundamental sequence.
\begin{df} \label{minhom} Let $S=1$ be a canonical NTQ system corresponding to a well-aligned fundamental sequence for a group $L$ relative to  limit groups  $L_1,\ldots ,L_k$:

\medskip
$S_1(X_1, X_2, \ldots, X_n) = 1,$

\medskip
$\ \ \ \ \ S_2(X_2, \ldots, X_n) = 1,$

$\ \ \ \ \ \ \ \ \ \  \ldots$

\medskip
$\ \ \ \ \ \ \ \ \ \ \ \ \ \ \ \ S_n(X_n) = 1$

 We say that values of $X_2,\ldots, X_n$ are {\em minimal with respect to the images of $L$} if for each $i>1$
values of $X_i,\ldots ,X_n$ are minimal in fundamental sequences for the groups $F_{R(S_i,\ldots, S_n)}$ modulo rigid subgroups and edge groups of the decomposition of the image of $L$ on level $i-1$ from the top. In particular, values of $X_2,\ldots, X_n$
are minimal in fundamental sequences for $F_{R(X_2,\ldots, X_n)}$ modulo rigid subgroups and edge groups of the image of $L$ in $F_{R(X_1,\ldots, X_n)}$.
\end{df}

\subsection {First step} Let $U_1(X_1,\ldots, X_n)=1$ be the system of equations satisfied by the images of $X_1,\ldots , X_n$ in $M$. If $F_{R(U_1)}$ is not a proper quotient of $N_0=F_{S(b)}$ we stop. Suppose it is a proper quotient.
Let $Var_{\rm
fund}(U_1)$ be the subset of homomorphisms from the set $Var_{\rm
fund}(S(b))$  minimal with respect to the canonical automorphisms modulo the images of $G$ and satisfying the additional equation $U_1(X_1,\ldots ,X _n)=1.$

 Let $G_1$ be a fully residually free group discriminated by the set of homomorphisms
$Var_{\rm fund}(U_1)$. $G_1=F_{R(U_1)}$  and we can suppose that the system $U_1(X_1,\ldots ,X_n)=1$ is  irreducible. Consider the family  of  well-aligned fundamental
sequences for $G_1$  modulo the images $R_1,\ldots, R_s$ of the factors in the free  decomposition of the subgroup $H_1=\langle X_2,\ldots, X_n\rangle $.  Since we only consider well aligned canonical fundamental sequences $c$ for $G_1$ modulo $R_1,\ldots, R_s$
they have, in particular, the following property:   $c$ is consistent with the decompositions of surfaces corresponding to
 quadratic equations of $S_1$  by a collection of simple closed curves mapped to the identity by $\pi _1$. Namely, if we refine
 the JSJ decomposition of $G$ by adding splittings
corresponding to the simple closed curves that are mapped to the identity when $G$ is mapped to the free product in \cite[Section 3.1]{KhaMya2012quantifier} then the boundary elements
of QH subgroups in this new decomposition are mapped to elliptic elements on all the levels of $c$.  Let $k_1$ be the sum of the ranks of the maximal free groups that can be the images of  the closed surfaces after we refined the JSJ decomposition.

The group  $G$ is embedded in the NTQ group corresponding to the fundamental sequence $c$.

Suppose the fundamental sequence $c$ has the top dimension component
$k_1$. If the NTQ system corresponding to the top level of the sequence
$c$ is the same as $S_1=1$, we extend the fundamental sequences
modulo $R_1,\ldots ,R_s$ by canonical fundamental sequences  for
$H_1$  modulo  the factors in the free decomposition of the subgroup
$\langle X_3,\ldots ,X_n\rangle $. Since the fundamental sequence is well alligned it has dimension less or  equal to $k_2$.  We continue this way to construct
fundamental sequences $Var_{\rm fund}(S_1(b))$.

 Suppose now that the fundamental sequence $c$ for $G_1$ modulo
$R_1,\ldots ,R_s$ has dimension strictly less than $k_1$ or has
dimension $k_1$, but the NTQ system corresponding to the top level of
$c$ is not the same as $S_1=1$.  Then we
use the following lemma (in which we suppose that $R_1,\ldots ,R_s$
are non-trivial).

\begin{lm}\label{prop}The image $G_{t}$ of  $G$ in
the group $H_{t}$ appearing on the terminal level $t$ of the sequence $c$ is a
proper quotient of $G$ unless $G$ is a free group.
\end{lm}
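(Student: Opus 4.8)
The plan is to argue by contradiction: suppose the homomorphism $\phi\colon G\to H_{t}$ obtained by composing the maps along the levels of $c$ is injective, so that $G_{t}=\phi(G)\cong G$, and derive that $G$ is free. First I would locate where a possible kernel can live. Since $c$ is a well-aligned fundamental sequence for $G_{1}$ relative to the images $R_{1},\dots,R_{s}$ of the free factors of $H_{1}=\langle X_{2},\dots,X_{n}\rangle$, each $R_{j}$ is elliptic on every level of $c$; hence $H_{t}$ is a free product of conjugates of $R_{1},\dots,R_{s}$ with a free group, and $\phi$ is already injective on $H_{1}=R_{1}\ast\cdots\ast R_{s}$, sending each $R_{j}$ to a conjugate of itself. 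Thus any failure of injectivity of $\phi$ on $G$ must come from the part of $G$ carried by the top-level variables $X_{1}$, that is, from the QH (surface) part of the JSJ decomposition $D$ of $G$ and of the refinement of $D$ used to define $k_{1}$.

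Next I would compare $c$ against the canonical structure of $G$ itself, and this is the step I expect to be the main obstacle. The integer $k_{1}$ and the top NTQ system $S_{1}=1$ record the QH complexity of the refined JSJ of $G$: a well-aligned fundamental sequence for $G_{1}$ modulo $R_{1},\dots,R_{s}$ can carry $G$ faithfully to its terminal level only if its top level reproduces this complexity, i.e.\ only if its top dimension is $k_{1}$ and its top NTQ system is $S_{1}=1$ (in which case it contains, up to extension, the fundamental sequence (\ref{eq:1}) along which $G$ embeds into $F$). Indeed, if $\phi$ were injective then the graph-of-groups structure along $c$ would pull back to a cyclic splitting of $G$ that, by canonicity of the JSJ, differs from $D$ only inside QH vertices; bookkeeping of the genera and Euler characteristics of the surfaces then forces the top level of $c$ to be exactly $S_{1}=1$ of dimension $k_{1}$. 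Since the hypothesis of the lemma puts us in the complementary case — top dimension strictly less than $k_{1}$, or top dimension $k_{1}$ but a top NTQ system other than $S_{1}=1$ — at least one essential, non-boundary-parallel simple closed curve $\gamma$ on one of the refined surfaces $\Sigma$ must be pinched, so that $\phi$ is not injective on $\pi_{1}(\Sigma)\le G$. Extracting this honest non-trivial element of $\ker\phi$ from the mere sub-maximality of the top complexity of $c$ is the technical core; it uses the first and second restrictions that define well-aligned fundamental sequences (see \cite{KhaMya2006elementary}, \cite{KhaMya2012quantifier}) together with the non-pinching analysis of QH subgroups.

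Finally, since the QH vertex groups of the JSJ decomposition of a limit group are genuine hyperbolic surface groups, $\gamma$ represents a non-trivial element of $G$, so $\ker\phi\ne1$, contradicting the injectivity of $\phi$; hence $G_{t}$ is a proper quotient of $G$. The only case in which the argument produces no such $\gamma$ is when the refined JSJ of $G$, and hence $D$ itself, has no QH vertex; by Corollary \ref{qh} the decomposition $D$ is then trivial, and, $G$ being freely indecomposable relative to $F$ and not a closed surface group (these cases having been set aside at the start of the section), $G$ is free. This completes the proof.
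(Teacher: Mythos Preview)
Your approach diverges from the paper's, and the divergence is at exactly the point you flag as the ``technical core''. The paper does \emph{not} extract a pinched curve from the sub-maximality of the top level of $c$; instead it uses the minimality hypothesis built into $V_{\mathrm{fund}}(U_1)$ (Definition~\ref{minhom}). Assuming $G_t\cong G$, the paper pulls back the abelian JSJ decomposition $D_t$ of the terminal group $H_t$ to an abelian splitting of $G$ in which the rigid and edge subgroups of $G$ are elliptic. This means the canonical automorphisms of $H_t$ modulo $R_1,\ldots,R_s$ act nontrivially on $G_t$ and hence can shorten the homomorphisms under consideration; but by construction those homomorphisms are already minimal in fundamental sequences modulo $R_1,\ldots,R_s$, a contradiction. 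The comparison of the top level of $c$ with $S_1=1$ plays no role whatsoever in the proof of the lemma itself; that case distinction only governs how the lemma is \emph{applied} afterwards.

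Your argument, by contrast, never invokes minimality, and the step you rely on instead is not established. The claim that ``bookkeeping of the genera and Euler characteristics of the surfaces then forces the top level of $c$ to be exactly $S_1=1$ of dimension $k_1$'' whenever $\phi$ is injective is asserted, not proved; you yourself call it the technical core and merely point to the first and second restrictions without showing how they give a pinched curve. Remember that $c$ is a fundamental sequence for $G_1$, not for $G$, and $G$ sits inside $G_1$ in a way that need not interact cleanly with the top-level complexity of $c$; a drop in the top dimension of $c$ relative to $k_1$ does not by itself locate a simple closed curve on a surface of $G$ that is killed. Without minimality there is no mechanism in your outline that forces $\ker\phi\neq 1$. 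The fix is to abandon the complexity-comparison route here and use the shortening/minimality argument as in the paper.
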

\begin{proof} Consider the terminal group of $c$; denote it $H_t$.
Suppose $G_{t}$ is isomorphic to $G$. Denote the abelian JSJ
decomposition of $H_t$ by $D_t$. Then there is an abelian
decomposition of $G$ induced by $D_t$. Therefore rigid (non-abelian
and non-QH) subgroups and edge groups of
$G$ are elliptic in this decomposition.
 But this is impossible because this  means that
the homomorphisms we are considering can be shortened by applying
canonical
automorphisms of $H_t$ modulo those subgroups $\{R_1,\ldots ,R_s\}$  but  $V_{fund}(U_1)$  contains only homomorphisms minimal  in fundamental sequences modulo  $R_1,\ldots ,R_s$ (see Def. \ref{minhom}).

\end{proof}

Therefore the image  of $G$ on all the levels of the fundamental sequence $c$ above some level $p$ is isomorphic to $G$ and on level $p$ is a proper quotient of $G$.    Denote the complete set  of fundamental sequences that encode all the homomorphisms  $G_p\rightarrow F$ by $\mathcal F$. One can extract from $c$ modulo level $p$
the induced  well aligned fundamental sequence for $G$,
see the definition of the induced fundamental sequence in \cite{KhaMya2006elementary}, \cite{KhaMya2012quantifier}. Denote this induced fundamental sequence up to level $p$ by $c_2$. Consider a fundamental sequence $c_3$ that consists of homomorphisms obtained by the composition of a homomorphism from $c_2$ and from a fundamental sequence
$b_2\in \mathcal F.$ Let $\bar N_1$ be the  NTQ group corresponding to $c_3$.
Then $\bar N_1$  is a regular NTQ group. The existence of the groups occurring in the fundamental sequence $c_3$ can be described by first order formulas and, therefore, there  is a quotient of $\bar N_1$ inside $M$.

Denote by $M(X,Z_1)$ (where $Z_1$ is the set of generators, $X\subset Z_1$) the group generated by  the top $p$ levels of the  NTQ group corresponding
 to the fundamental sequence $c$.  Consider the {\em block-NTQ group}  $N_1$ that is a fully residually free quotient
 of the amalgamated product of $M(X,Z_1)$  and $\bar N_1$  amalgamated  along the top $p$ levels of $\bar N_{1}$. To obtain a fully residually free quotient we   add relations to make it commutative transitive.  Assign the  sequence $c_3$, regular NTQ group $\bar N_1$, block-NTQ group $N_1$, and $M(X,Z_1)$ to the vertex $w_{2}$ of the path.
 We draw an edge from the vertex $w_{1}$  to $w_{2}$.

If there are no additional equations  on generators of $\bar N_1$ in $M$ we stop because in this case $\bar N_1$ is the regular NTQ system that is contained in $M$ and contains $G$. Suppose there are additional equations on generators of  $\bar N_1$.  Let $U_2=1$ be a system of all equations on generators of $\bar N_1$.  Moreover, we suppose that $G_2=F_{R(U_2)}$ is discriminated by homomorphisms that are minimal for $\bar N_1$ with respect to images of $G$ and minimal  for
$M(X,Z_1)$ with respect to images of $F_{R(U_1)}$.

\subsection{Second step}\label{2nd}
We will describe the next  step in the construction
which basically is general. The fundamental sequence and the block-NTQ group obtained on the second step will be assigned to vertex $w_{3}$.

Suppose  the JSJ decomposition for the NTQ system corresponding to the top level of $c$ corresponds to
the equation $S_{11}(X_{11},X_{12},\ldots )=1;$ some of the
variables $X_{11}$ are quadratic, the others correspond to
extensions of centralizers.
 Construct a canonical fundamental sequence $c^{(2)}$  for $G_2$ modulo  the factors in the free decomposition of the subgroup generated by
$X_{12},\ldots  $.

Denote by $N_0^1$ the image of the subgroup generated by $X_1,\ldots
,X_n$ in the group $M(X,Z_1)$ discriminated by $c$, we will write $N_0^1= \langle X_1,\ldots
,X_n\rangle _c.$ Denote by $N_0^2=\langle X_1,\ldots ,X_n\rangle _{c^{(2)}}$ the image of
$\langle X_1,\ldots ,X_n\rangle $
 in the group discriminated by $c^{(2)}$.
$N_0^1$ must be isomorphic to
$N_0^2$ because they correspond to the same subgroup of $M$.

{\bf Case 1.} If the top levels of $c$ and $c^{(2)}$ are the same, then
we go to the second level of $c$ and consider it the same way as the
first level.

{\bf Case 2.}  If the top levels of the NTQ system for $c$ and $S_1$ are the same
(therefore $c$ has only one level). We work with $c^{(2)}$ the same
way as we did for $c$. Suppose $c^{(2)}$ is not the same as $c$.   Then  the image of $G$ on some level
$p$ of $c^{(2)}$ is a proper quotient of $G$ by
Lemma \ref{prop}. Let $M(X,Z_2)$ be the NTQ group corresponding to the top $p$ levels of $c^{(2)}$. We consider fundamental sequences constructed as follows: the top part is the fundamental sequence induced by the top part of $c^{(2)}$
above level $p$ for $G$, and the bottom part is a fundamental sequence for this quotient of $G$ (solutions will go along the first fundamental sequence from the top level to level $p$ and then continue along one of the fundamental sequences for the image of $G$ on level $k$).  We assign each of these fundamental sequences to a vertex  $w_{3}$, then take the corresponding canonical NTQ group $\bar N_2$. Then we construct the block-NTQ group as we did on the first step, denote it by $N_2$. We also assign $N_2$ and $\bar N_2$ to the vertex  $w_{3}$.

{\bf Case 3.}  If the top levels of the NTQ system for $c$ and $S_1$ are  not  the same and
the top levels of $c$ and $c^{(2)}$ are not the same, then we look at  $N_0^2 =N_0^1$.  It follows from the minimality restriction made for the solutions used to construct $F_{R(U_2)}$ that there is some
 level $k$ from the top of $c^{(2)}$ such that we can suppose that the image of
either $G$ or $N_0^1$ on this level is a proper quotient of it (and on the levels above k is isomorphic to $G$ (resp. to $N_0^1$).

If the image of $G$ that we denote $G_t$ is a proper quotient of $G$ we do what we did on step 1. Namely, we construct the fundamental sequence for  $G$ induced from $c^{(2)}$ and continue it with a canonical fundamental sequence for $G$, construct the NTQ group $\bar N_2$ for this fundamental sequence and the block NTQ group $N_2$ and assign them to a vertex $w_3$.

Suppose now that $G_t$ is isomorphic to $G$ and $N_{0,t}^1$ is a proper quotient of $N_0^1$. Consider
fundamental sequences for $N_{0,t}^1$ modulo the images of subgroups $R_1,\ldots, R_s$, and apply to them step 1. Denote the obtained fundamental sequences  by $f_i$.
Construct fundamental sequences for the subgroup generated by the images of
$X_1,\ldots ,X_n$ with the top part being induced from the top
part of $c^{(2)}$ (above level k) and bottom part being some $f_i$,
but not the sequence with the same top part as $c$. We construct the block-NTQ group amalgamating the top $k$ levels of $c^{(2)}$ and the block-NTQ group constructed for $f_i$ as on the first step. We denote this block-NTQ group (consisting of three blocks) by $N_2$ and its subgroup that is a regular NTQ group by $\bar N_2$. These groups  are assigned then to $w_3.$

Since  we started this step assuming that $U_2=1$ is a proper equation on the generators of $\bar N_1$  It must be some level $s$ of $c$ such that we do not have cases 2 or 3 on the levels above $s$ but have one of these cases on level $s$.

\subsection{General step}\label{nth}
We now describe  the $n$'th step of the construction. Denote by
$N_i$ the block-NTQ group constructed on the $i$'th step and assigned to vertex $w_{i+1}$ and by
$N_i^j, j> i$ its image  on the $j$'th step. Denote by $\bar N_i$ the NTQ groups constructed on $i$'th step.

Let $\{j_k,\
k=1,\ldots ,s\}$ be all the indices for which the top level of
$N_{j_k+1}$ is different from the top level of $N_{j_k}$. Let $M(X, Z_i)$ be the group corresponding to the top block on step $i$ and $M^j(X, Z_i)$ its image in $N_{j-1}^j.$

On each step $i$ we consider fundamental sequences for the proper quotient of $N_{i-1}$
 modulo the images of freely indecomposable free factors  of the second level block NTQ group $N_{i-1}$ (images of rigid subgroups in the relative decomposition of $M(X, Z_{i-1})$). Let $R^{r(i)}$ be the family of these rigid subgroups on step $i$. Every time when we increase  parameter subgroups $R^{r(i)}$, $r(i)$ increases by 1.  Let $g(t)$ be the last step  when  we constructed fundamental sequences modulo $R^t$. On step $g(t)+1$ parameters subgroups are increased to become $R^{t+1}$ and fundamental sequences for the top block are constructed modulo $R^{t+1}$, so $r(g(t)+1)=t+1.$

Notice that  $G$ and all the groups $M^{g(r)}(X, Z_{g(r)})$, $g(r)<n,$ are
embedded into $N_{n-1}^n.$

{\bf Case 1.} The top levels of $c^{(n)}$ and $c^{(n-1)}$ are the
same. In this case we go to the second level and consider it the
same way as the first level.

{\bf Case 2.} The top levels of $c^{(n-1)}, c^{(n-2)},\ldots ,
c^{(n-i)}$ are the same, and the top levels of $c^{(n-1)}$ and $c^{(n)}$
are not the same. Then on some level $p$ of the NTQ group for $c^{(n)}$ we can suppose
that the image of $M^n(X,Z_{n-1})$ is a proper quotient of it (and on the levels above $p$ it is  isomorphic to $M^n(X,Z_{n-1})$). Let $r$ be the minimal such index that  $M^n(X,Z_{g(r)})_t$ is a proper quotient of $M^n(X,Z_{g(r)})=M^{g(r)}(X,Z_{g(r)})$. Then the levels below the first level (level 2 and below) of the block NTQ group $N_n$ that we are constructing on step $n$  will correspond to the block NTQ group $\tilde N_{g(r)+1}$ that we would construct on step $g(t)+1$ for $M^n(X,Z_{g(r)})_t$.  Moreover, we consider only fundamental sequences for $\tilde N_{g(r)+1}$ with the top level different from $c^{(g(r))}$ (not of maximal complexity).

{\bf Case 3.} The top levels of $c^{(n-2)}$ and $c^{(n-1)}$ are not
the same and the top levels of $c^{(n-1)}$ and $c^{(n)}$ are not the
same. Then on some level $p$ of $c^{(n)}$ the image of
$M^n(X,Z_{n-1})$  is a proper quotient of $M^n(X,Z_{n-1}).$  Let $r$ be the minimal such index that  $M^n(X,Z_{g(r)})_t$ is a proper quotient of $M^n(X,Z_{g(r)})=M^{g(r)}(X,Z_{g(r)})$. Then the levels below the first level (level 2 and below) of the block NTQ group $N_n$ that we are constructing on step $n$  will correspond to the block NTQ group $\tilde N_{g(r)+1}$ that we would construct on step $g(t)+1$ for $M^n(X,Z_{g(r)})_t$.  Moreover, we consider only fundamental sequences for $\tilde N_{g(r)+1}$ with the top level different from $c^{(g(r))}$ (not of maximal complexity).

If going from the top to the bottom of the block-NTQ system, on some level of the block-NTQ group we can apply Case 2 or 3 to this level we apply it. There will be always such a level.

\subsection{Some auxiliary results}
We recall several results that we will need.

Let $G$ be a freely indecomposable limit group with the JSJ decomposition $D$. Let $K$ be the fundamental group of the subgraph  of  groups  generated by the rigid subgroups of $D$. Let  $\phi$ be an epimorphism from $G$ to a limit group $H$, and suppose that $H$  is freely indecomposable relative to $\phi (K)$ and $\Delta$ is the JSJ decomposition of $H$ relative to $\phi (K)$. Suppose that $\phi$ is non-pinching on each QH subgroup.

Let $Q$ be a QH subgroup in $D$ and let $S$ be the corresponding punctured surface. Since the boundary elements of $Q$ are mapped by $\phi $ to elliptic elements in $\Delta$, the (maybe trivial) cyclic decomposition induced on $\phi (Q)$ by $\Delta$ can be lifted to (maybe trivial) maximal cyclic decomposition of $Q$, in which every cyclic edge group is mapped by $\phi $ to an elliptic element in $\Delta$ which corresponds to some decomposition of the punctured surface $S$ along a maximal collection of disjoint non-homotopic simple closed curves. Let $\Delta (Q)$ be the corresponding cyclic decomposition of $Q$ and let $\Delta (S)$ be the associated  collection of simple closed curves.

\begin{lm} \label{11}
Let $\bar Q$ be a QH subgroup in $\Delta$, and let $\bar S$ be the associated punctured surface. If $\phi$ maps non-trivially a connected subsurface $S_1$ of $S - \Delta (S)$ into $\bar S$, then $genus (\bar S)\leq genus (S)$ and $|\chi (\bar S)|\leq |\chi (S)|$. Moreover, in this case $\phi$ maps the fundamental group of the subsurface $S_1$  into a finite index subgroup of $\bar Q$.
\end{lm}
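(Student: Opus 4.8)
The plan is to argue geometrically on the surfaces, using that $\phi$ is non-pinching on $Q$ together with the way $\Delta(S)$ was chosen. First I would recall the setup: the cyclic decomposition $\Delta(Q)$ of $Q$ was obtained by lifting the decomposition that $\Delta$ induces on $\phi(Q)$, and $\Delta(S)$ is the associated family of disjoint, pairwise non-homotopic, non-null-homotopic simple closed curves on $S$ realizing that decomposition. Cutting $S$ along $\Delta(S)$ produces finitely many connected subsurfaces; let $S_1$ be one of them with $\pi_1(S_1)$ mapped non-trivially into $\bar S$. By construction the edge groups of $\Delta(Q)$ — i.e.\ the $\pi_1$ of the curves in $\Delta(S)$ — are sent by $\phi$ to elliptic elements of $\Delta$, so $\phi(\pi_1(S_1))$ is elliptic in the induced splitting and hence, up to conjugacy, lands in the vertex group $\bar Q$. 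Thus we get an induced homomorphism $\pi_1(S_1)\to\bar Q=\pi_1(\bar S)$, and the content of the lemma is to control $\bar S$ in terms of $S_1$ (hence in terms of $S$).

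Next I would show the induced map $\pi_1(S_1)\to\pi_1(\bar S)$ has image of finite index. The key point is the non-pinching hypothesis: no non-null-homotopic simple closed curve on $S$ is killed by $\phi$, and since $S_1$ is a component of $S$ cut along $\Delta(S)$, the only simple closed curves on $S_1$ that could become null-homotopic in $S$ are the ones parallel to the cutting curves, which by maximality of the collection $\Delta(S)$ and the lifting construction do not get pinched either. Therefore $\phi$ restricted to $\pi_1(S_1)$ is injective on every non-peripheral simple closed curve subgroup. An injective-on-curves homomorphism between surface groups with non-abelian (more precisely, non-cyclic) image must have finite-index image: if the image had infinite index it would be a free group, or a surface group of a surface admitting an essential curve whose preimage is an essential non-peripheral curve of $S_1$ mapped to a proper power or trivially, contradicting non-pinching. (Here I would invoke the standard fact that a $\pi_1$-injective-on-simple-closed-curves map $\pi_1(S_1)\to\pi_1(\bar S)$ between hyperbolic surface groups with non-elementary image is, after passing to the realization, homotopic to a covering onto a subsurface, and non-pinching forces that subsurface to be all of $\bar S$ up to finite index.) This gives the "moreover" clause.

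Finally, finite index of $\phi(\pi_1(S_1))$ in $\pi_1(\bar S)$ immediately yields the inequalities. A finite-index subgroup of $\pi_1(\bar S)$ is itself the fundamental group of a finite cover $\bar S'$ of $\bar S$, and $\phi(\pi_1(S_1))$ is a quotient — in fact, a subgroup of $\pi_1(S_1)$ surjects onto it — so $\pi_1(S_1)$ surjects onto $\pi_1(\bar S')$; since both are surface groups, a surjection forces $\mathrm{rk}(\pi_1(\bar S'))\le\mathrm{rk}(\pi_1(S_1))$, giving $|\chi(\bar S')|\le|\chi(S_1)|$, and since $[\pi_1(\bar S):\phi(\pi_1(S_1))]\ge 1$ we get $|\chi(\bar S)|\le|\chi(\bar S')|\le|\chi(S_1)|\le|\chi(S)|$, the last because $S_1$ is obtained from $S$ by cutting (which only decreases $|\chi|$ among connected pieces, using that $\chi$ is additive and each piece has $|\chi|\le|\chi(S)|$). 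The genus bound follows similarly: a finite cover cannot decrease genus below that of the base in the relevant sense, and a connected subsurface obtained by cutting along simple closed curves has genus at most that of $S$, so combining with the surjection $\pi_1(S_1)\twoheadrightarrow\pi_1(\bar S')$ — which forces $\mathrm{genus}(\bar S')\le\mathrm{genus}(S_1)$ — gives $\mathrm{genus}(\bar S)\le\mathrm{genus}(S)$.

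The main obstacle I anticipate is the middle step: rigorously deducing finite-index image from "injective on simple-closed-curve subgroups plus non-pinching." One must rule out, e.g., the image being a non-peripheral infinite-index subgroup (free, or the $\pi_1$ of an infinite cover), and here the cleanest route is probably to pass to the topological realization, put the map in a normal form (pinched/non-pinched decomposition à la the preceding paragraphs of the paper), and observe that non-pinching of $\phi$ on $Q$ together with the fact that $S_1$ carries no pinched curves forces the realization on $S_1$ to be a non-degenerate branched-cover-like map onto $\bar S$, so the image is finite index. I would lean on the surface-map machinery already cited in the paper (the non-pinching/pinched-curves framework and the results of \cite{KhaMya2012quantifier}) rather than redoing it from scratch.
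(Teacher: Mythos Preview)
Your approach is sound in outline but inverts the paper's order and contains a small gap in the genus step.

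The paper's proof is two lines: the inequalities are asserted directly from the fact that the boundary components of $S_1$ are sent to non-trivial elliptic elements of $\Delta$ (hence to peripheral elements of $\bar Q$), and the finite-index conclusion is obtained by citing \cite[Lemma~7]{KhaMya2006elementary}. In other words, the paper treats the size inequalities as the immediate geometric statement and the finite-index claim as the part requiring outside machinery, whereas you do the reverse, deriving the inequalities as a corollary of finite index. Your expansion of the finite-index argument (non-pinching $\Rightarrow$ injective on simple closed curves $\Rightarrow$ finite-index image in $\bar Q$) is essentially a sketch of what that cited lemma provides, so on that point you and the paper agree modulo level of detail.

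The gap is in your deduction of the genus bound. From a surjection $\pi_1(S_1)\twoheadrightarrow\pi_1(\bar S')$ you conclude $\mathrm{genus}(\bar S')\le\mathrm{genus}(S_1)$, but both groups are free (the surfaces have boundary), and a surjection of free groups carries no genus information on its own: the identity $F_2\to F_2$ can be read as a map from a pair of pants to a once-punctured torus. What makes the inequality true here is that the surjection respects peripheral structure --- boundary curves of $S_1$ go to peripheral elements of $\bar S$, hence of $\bar S'$ --- and this is exactly the observation the paper isolates (``the boundary components of $S_1$ are mapped to non-trivial elliptic elements''). Once you record that, the genus bound follows, e.g.\ by passing to $H_1$ modulo the peripheral classes (a quotient of rank $2g$) or by realizing the map topologically as a boundary-preserving map of surfaces. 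So the fix is small, but as written your genus argument is incomplete, and the missing ingredient is precisely the one-line observation the paper uses to get both inequalities directly without routing through finite index.
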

\begin{proof} Since the boundary components of $S_1$ are mapped to non-trivial elliptic elements of in $\Delta$,  we have  $genus (\bar S)\leq genus (S)$ and $|\chi (\bar S)|\leq |\chi (S)|$.  The last statement of the lemma follows from \cite[Lemma 7]{KhaMya2006elementary}.
\end{proof}

In the case when there is a pinching corresponding to $\phi$
 we construct another graph of groups $\Gamma$ obtained from $D$  by cutting the surfaces corresponding to QH vertex groups along the pinching and  filling each curve with a disk. Then $\phi=\phi _1\circ\theta$,  where $\theta$ maps the pinching to the identity and $\phi _1$ is non-pinching.

We call a QH subgroup of $D$ {\em stable} if there exists a QH subgroup $\bar Q$ in $\Delta$ so that $\phi _2$ maps $Q$ into $\bar Q$ and the sizes of $Q$ and $\bar Q$ are the same.

We now give the setting for the next lemma.  Let a regular  NTQ system $S(X_1,\ldots, X_n)=1$ have the form
$$S_1(X_1,\ldots ,X_n)=1,$$
$$\ldots $$
$$S_n(X_n)=1,$$
where $S_1=1$ corresponds to  the top level of the  JSJ decomposition  for
$\Gamma _{R(S)}$, variables from $X_1$ are quadratic. Consider this system together with the
fundamental sequence $V_{\rm fund}(S)$ defining it. Let $V_{\rm
fund}(U_1)$ be the subset of $V_{\rm fund}(S)$ satisfying some
additional equation $U_1=1$, and $G_1$ a group discriminated by this
subset. Consider the family  of those canonical fundamental
sequences for $G_1$ modulo the images $R_1,\ldots ,R_s$ of the
factors $P_1,\ldots ,P_s$ in the free decomposition of
 the subgroup $\langle X_2,\ldots ,X_m\rangle $, which have the same Kurosh rank modulo them as
 $S_1=1$ and are compatible up to an automorphism with the pinching of QH subgroups of $F_{R(S)}$.  Denote this free decomposition by $H_1*$. The terminal group of such a fundamental sequence does not have a sufficient splitting (see \cite{KhaMya2006elementary}) relative to  $H_1*$.

Denote such a fundamental sequence by $c$, and the corresponding NTQ
system $\bar S=1 (mod \ H_1*)$, where $\bar S=1$ has the form
$$\bar S_1(X_{11},\ldots ,X_{1m})=1$$
$$\ldots $$
$$\bar S_m(X_{1m})=1.$$

Denote by $D_S$ a canonical decomposition corresponding to the group
$F _{R(S)}$. Non-QH subgroups in this decomposition are
$P_1,\ldots ,P_s$, QH subgroups correspond to the system
$S_1(X_1,\ldots ,X_n)=1.$ For each $i$ there exists a canonical
homomorphism
$$\eta _i: F _{R(S)}\rightarrow F _{R(\bar S_i,\ldots ,\bar S_m)}$$
such that $P_1,\ldots ,P_s$ are mapped into
rigid subgroups in the canonical decomposition of $\eta
_i(F _{R(S)})$.

Each QH subgroup in the decomposition  of $F _{R(\bar S_i,\ldots ,\bar S_m)}$
as an NTQ group is a QH subgroup of $\eta _i(F _{R(S)})$. Since $\eta _i$ can be represented as a composition of the map killing the pinching on QH subgroups of $D_S$ and a non-pinching map,  by Lemma \ref{11},
for each QH subgroup $Q_1$ of $\eta
_i(F _{R(S)})$ there exists a QH subgroup of the image of $F _{R(S)}$ after killing the pinching (not necessarily a maximal QH subgroup) that is
mapped into a subgroup of finite index in $Q_1$. The size of this QH
subgroup is, obviously, greater or equal to the size of $Q_1$. Those
QH subgroups of $F _{R(S)}$ that are mapped into QH subgroups of the
same size by some $\eta _i$ are  stable.

\begin{lm}\label{ch}
In the conditions above there are the following possibilities:

(i) $size\ (\bar S) = size\ (S_1)$,  in this case $c$ has only one level identical to $S_1$;

(ii) It is possible to modify the system $\bar S=1$ in such a way that $size\ (\bar S) <
size\ (S_1)$;

\end{lm}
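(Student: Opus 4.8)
The plan is to compare the size tuples of the top level of $c$ with that of $S_1=1$ through the canonical homomorphisms $\eta_i$ supplied in the setup, using Lemma~\ref{11}, and to show that the hypothesis ``$c$ has the same Kurosh rank modulo $R_1,\ldots,R_s$ as $S_1=1$'' leaves exactly the two listed possibilities. First I would fix $\eta_1\colon F_{R(S)}\to F_{R(\bar S_1,\ldots,\bar S_m)}$ and factor it as the map killing the pinching on the QH subgroups of $D_S$ followed by a non-pinching map, as in the paragraph preceding the lemma. Each QH subgroup $\bar Q$ occurring in the NTQ decomposition of $F_{R(\bar S_1,\ldots,\bar S_m)}$ is a QH subgroup of $\eta_1(F_{R(S)})$, so by Lemma~\ref{11} there is a QH subgroup $Q(\bar Q)$ of the de-pinched image of $F_{R(S)}$ that the non-pinching part maps onto a finite-index subgroup of $\bar Q$, with $size(Q(\bar Q))\geq size(\bar Q)$; distinct $\bar Q$ force distinct $Q(\bar Q)$, since a connected $Q$ has image in a single QH vertex group. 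As each QH subgroup of the de-pinched $F_{R(S)}$ is a subsurface of a QH subgroup of $F_{R(S)}$ itself, the QH subgroups appearing at the top level of $\bar S$ are injectively matched with QH subgroups of $F_{R(S)}$ of at least their size, and in the presence of pinching or of a non-trivial induced splitting $\Delta(S)$ strictly fewer or strictly smaller ones remain.

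Next I would bring in the Kurosh-rank hypothesis. Since the terminal factors of $c$ are exactly $R_1,\ldots,R_s$, the same factors bounding $S_1=1$, the ``$factors$'' component of the complexity agrees, so equality of Kurosh rank gives $dim(c)=dim(S_1=1)$. Because $S_1=1$ is the purely quadratic top level of a regular NTQ system, its dimension is the standard function of the sizes of its QH subgroups alone, and likewise for each level of $\bar S=1$. Combining the dimension equality with the injective size-nondecreasing matching of the previous paragraph forces one of two outcomes: either (a) there is no pinching, the induced splitting $\Delta(S)$ is trivial, the matching $\bar Q\mapsto Q(\bar Q)$ is a size-preserving bijection onto all QH subgroups of $F_{R(S)}$, and $\bar S$ has no level below its top; or (b) a strict inequality occurs somewhere, i.e.\ $F_{R(S)}$ has a non-stable QH subgroup.

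In case (a) the top level of $c$ is $S_1=1$ and $c$ is a single level, so $size(\bar S)=size(S_1)$, giving (i). In case (b) I would fix a non-stable QH subgroup $Q$: either $Q$ is genuinely pinched, or the induced cyclic decomposition $\Delta(Q)$ is non-trivial, or $\phi_2$ maps $Q$ into a QH subgroup of $\bar S$ of strictly smaller size. In each instance one modifies $\bar S=1$ exactly as in the geometric construction following Lemma~\ref{1.4}: refine the top level of $\bar S=1$ along the image of $\Delta(S)$ together with the pinching curves, fill with disks the punctured subsurfaces that become rigid or free, and re-read the NTQ system of the refined fundamental sequence. This replaces a surface by surfaces of strictly smaller total complexity without changing the family of homomorphisms described, so the modified $\bar S=1$ has $size(\bar S)<size(S_1)$, giving (ii). That (a) or (b) always holds — and that in case (b) the top level of $c$ cannot literally equal $S_1$ — is forced by the hypothesis that the terminal group of $c$ has no sufficient splitting relative to $H_1\ast$.

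The main obstacle is the modification step in case (b): making precise the surgery on $\bar S=1$ and verifying that it strictly lowers the size tuple while still describing the correct family of homomorphisms $G_1\to F$, i.e.\ that a non-stable QH subgroup — whether detected through pinching or through a non-trivial induced splitting — always yields an \emph{admissible} modification of the NTQ system. This is where the bookkeeping of boundary components, the relative freely indecomposable structure modulo $H_1\ast$, and the absence of a sufficient splitting of the terminal group must be used together, essentially reproducing the corresponding argument of \cite{KhaMya2006elementary}.
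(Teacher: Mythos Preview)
Your overall strategy matches the paper's: compare QH subgroups of $\bar S$ with (subsurfaces of) QH subgroups of $S_1$ via the $\eta_i$ using Lemma~\ref{11}, invoke the Kurosh-rank hypothesis to control the free and surface-group factors, and split into the stable/non-stable dichotomy. For case (i) the paper's argument is also slightly different from yours: rather than arguing through a size-preserving bijection, it observes directly that if $size(\bar S)=size(S_1)$ then the generic family of solutions of $S_1=1$ factors through $\bar S=1$, which forces $c$ to consist of a single level identical to $S_1$.

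The genuine divergence is in case (ii), and here your proposed modification is not the one the paper performs and is not clearly well-posed. You suggest refining $\bar S$ by cutting its surfaces along ``the image of $\Delta(S)$ together with the pinching curves'' and filling disks, as in the remark following Lemma~\ref{1.4}. But $\Delta(S)$ and the pinching curves live on the QH surfaces of $S_1$ in $D_S$, not on the QH surfaces of $\bar S$; their images under $\eta_i$ land in elliptic (edge-group) elements of the NTQ decomposition of $\bar S$, not as simple closed curves on its QH surfaces, so the surgery you describe is not a surgery on $\bar S$ at all. The paper instead invokes the analog of \cite[Theorem~9]{KhaMya2005implicit} to \emph{rearrange the levels} of $\bar S$, moving all stable QH subgroups of $S_1$ (those mapped to QH subgroups of the same size) to the bottom level $m$. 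This is a known operation on NTQ systems that preserves the family of homomorphisms described. After this rearrangement, if every QH subgroup of $S_1$ were stable one would have $size(\bar S)=size(S_1)$, contrary to the case hypothesis; hence some QH subgroup is non-stable, and then Lemma~\ref{11} (together with the paragraph following it) gives $size(\bar S)<size(S_1)$ directly. You correctly flagged the modification step as the main obstacle, but the tool that resolves it is this level-rearrangement result from \cite{KhaMya2005implicit}, not the surface surgery you propose.
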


\begin{proof}

 The fundamental sequence $c$ modulo the decomposition $H_1* $
has the same Kurosh rank as $S_1=1$. The Kurosh rank of $Q_1=1$ is the
sum of the following  numbers: \begin{enumerate} \item [1)] the
dimension of a free factor $F_1$ in the free
decomposition of $F_{R(S)}$ corresponding to an empty equation in
$S_1=1$;
\item [2)] the sum of dimensions of surface group factors; \item
[3)] the number of free variables of quadratic equations with
coefficients in $S_1=1$ corresponding to the fundamental sequence
$Var _{\rm fund}(S)$, \item [4)] $factors (Var_{\rm fund})$.\end{enumerate} Because $c$ has the same
Kurosh rank, the free factor $F_1$ is unchanged. Surface factors are sent into
different free factors.

If $size\ (\bar S) = size\ (S_1)$, then the generic family of solutions for $S_1=1$ factors through $\bar S=1$ and in this case $c$ has only one level identical to $S_1$.

Otherwise,  by the analog of \cite[Theorem 9]{KhaMya2005implicit} we move all the stable QH subgroups corresponding to the system $S_1=1$ to the bottom level $m$ of  $\bar S$. If all QH subgroups corresponding to the system $S_1=1$ were stable we would have
$size\ (\bar S) = size\ (S_1)$.  Therefore there are non-stable QH subgroups. Then
$size\ (\bar S) < size\ (S_1)$ by Lemma \ref{11} and the paragraph after it.
The lemma  is proved.
\end{proof}

\subsection{The  procedure is finite}

In this subsection we will prove the following result.

\begin{prop} \label{AE} The path $w_0,w_1,...,$ is finite.\end{prop}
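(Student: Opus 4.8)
The plan is to argue by contradiction: assuming the path $w_0, w_1, \ldots$ is infinite, I would attach to each vertex a complexity lying in a fixed well-ordered set and derive a contradiction by showing that these complexities strictly decrease along the edges of the path.

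To the vertex $w_{i+1}$ I would attach the tuple $\mathcal{C}_i$ whose entries are the complexities $Cmplx(Var_{\rm fund})$ of the fundamental sequences defining the successive blocks of the block-NTQ group $N_i$, listed from the top block downward, compared left-lexicographically. Each entry is $(\text{Kurosh rank},\ \text{regular size})$, a finite tuple of non-negative integers (the regular size being a tuple of pairs $(genus,|\chi|)$), hence lies in a well-ordered set. The preliminary point is that the number of blocks of $N_i$, and so the length of $\mathcal{C}_i$, is bounded uniformly in $i$: every block is an NTQ group occurring in a fundamental sequence for homomorphisms $G\to F$, whose number of levels --- a fortiori number of blocks --- is bounded in terms of $G$. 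Granting this, all the $\mathcal{C}_i$ lie in one well-ordered set and it suffices to prove $\mathcal{C}_{i+1}<\mathcal{C}_i$ for every $i$.

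The core is the block-by-block descent following the case analysis of Sections \ref{2nd} and \ref{nth}. First, the Kurosh rank of each block is non-increasing along the path: replacing a block by a well-aligned fundamental sequence preserves the Kurosh rank (this is exactly why only well-aligned fundamental sequences of the same Kurosh rank modulo the current parameters are admitted, as in the hypotheses of Lemma \ref{ch}), and enlarging the parameter families $R^{r(i)}$ can only decrease it. Next, whenever the top block genuinely changes its top level, i.e., we are in Case 2 or 3 at the top, Lemma \ref{ch} gives a dichotomy: either $size(\bar S)=size(S_1)$, which forces $c$ to consist of a single level identical to $S_1$ and hence means the top level did not in fact change (contradicting the hypothesis of Case 2/3), or $size(\bar S)<size(S_1)$, a strict decrease of the regular size of the top block with Kurosh rank preserved; either way $\mathcal{C}_i$ strictly decreases. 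The remaining steps are those where the top block is frozen and the construction descends to a lower block (Case 1 propagating downward, and enlargements $R^t\subset R^{t+1}$). For these I would invoke Lemma \ref{prop}: each time we pass to a level $p$ at which the relevant image ($G_t$, or $M^n(X,Z_{g(r)})_t$) is not isomorphic to the group above it, that image is a \emph{proper} limit quotient; since every strictly descending chain of proper limit quotients of a limit group is finite (as already used for Lemma \ref{NTQ}), the parameter families are enlarged only finitely often and, for each fixed $t$, only finitely many steps are performed modulo $R^t$. Within such a run the top-block descent applies, and when it is exhausted the recursion moves to the next level, of which there are boundedly many. Hence every edge of the path strictly decreases $\mathcal{C}_i$, contradicting well-foundedness.

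I expect the main obstacle to be precisely this last piece of bookkeeping: organizing Cases 1, 2 and 3, the level-by-level recursion inside a single block, and the enlargements of the parameter families $R^t$ into one measure that provably drops along every edge --- and in particular verifying that the re-initialization of the construction for a lower block in Case 3, and ``going to the next level'' in Case 1, cannot let the complexity climb back up. This is where one uses that a frozen block stays frozen, that the Kurosh rank is monotone under enlargement of parameters, and the size estimates of Lemmas \ref{11} and \ref{ch}.
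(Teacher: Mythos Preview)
Your ingredients are the paper's ingredients---Lemma \ref{ch} for the drop in size of a block whose top level actually changes, Lemma \ref{prop} for the proper-quotient step, and the descending chain condition on limit quotients---and your informal picture (top block eventually freezes, then recurse on a smaller problem) is exactly right. The difficulty is the packaging.

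The paper does \emph{not} build a single measure $\mathcal{C}_i$ and show it strictly decreases along every edge; it argues by \emph{induction on the complexity of $S_1(X_1,\ldots,X_n)=1$}. First, Lemma \ref{ch} shows that each application of Case~3 to the top block strictly lowers that block's complexity, so after some step $n_1$ the top block is frozen and $r(n)=r(n_1)$. From then on the procedure operates on the second block; but in Cases~2/3 of the general step the second (and lower) blocks are \emph{replaced} by $\tilde N_{g(r)+1}$ for the minimal $r$ with $M^n(X,Z_{g(r)})_t$ a proper quotient, and one restricts to fundamental sequences whose top level is \emph{different from $c^{(g(r))}$}, hence of complexity strictly smaller than that of $S_1$. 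The induction hypothesis now finishes the argument.

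Your single-lex-measure formulation has a real gap precisely at this re-initialization. When the lower blocks are rebuilt as $\tilde N_{g(r)+1}$, the bound you get is relative to step $g(r)$, not to step $n-1$: the new second-block complexity is only guaranteed to be below the complexity of $c^{(g(r))}$ (and hence below that of $S_1$), not below the second-block complexity at step $n-1$, which may already have shrunk in the interim. So $\mathcal{C}_{n}<\mathcal{C}_{n-1}$ can fail at exactly the step you flagged as worrisome. Relatedly, your claim that the number of blocks is uniformly bounded in $i$ is neither proved nor needed; the inductive organization makes it unnecessary. If you want to salvage a pure well-ordering argument, the right measure is not the tuple of \emph{current} block complexities but something like the complexity of $S_1$ paired with the (ordinal) height of the recursive call---which is just induction in disguise.
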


 We will use induction on the complexity of $S_1(X_1,\ldots , X_n)=1$.  The induction assumption is that for any block-NTQ system  for which the complexity of the first level is less than the complexity of $S_1(X_1,\ldots , X_n)=1$ the procedure is finite. Suppose the procedure  is infinite.

By Lemma \ref{ch},
every time we apply the transformation of Case 3 (we refer to the
cases from Section \ref{nth}) in the construction  we
either (i) decrease the dimension in the top block, therefore decrease the Kurosh rank, or (ii) replace
the NTQ system in the top block  by another NTQ system of the same
dimension but of a smaller size. Hence the complexity defined in the beginning of this section decreases. Notice that the complexity of the top block is bounded by the complexity of $S_1(X_1,\ldots ,S_n)=1.$ Hence,
Case 3 cannot be applied infinitely many times to the top block.

Therefore starting at some step $n_1$ for any $n>n_1$, $r(n)=r(n_1)$, the top blocks of all $N_n$ are the same and fundamental sequences modulo $R^{r(n)}=R^{r(n_1)}$  for the top block are of maximal complexity.

Therefore eventually beginning at some step after step $n_1$ we are  applying the procedure to the second block, more precisely to the terminal level of first block fundamental sequence.

If at some step $n>n_1$ $G^{(n)}_t$ is a proper quotient of $G$ then  the procedure for the second level is eventually applied to $G^{(n)}_t$ and  is finite by induction. Otherwise, if
we apply Case 2, we consider the second block for  proper quotients
of a finite number of groups.  Let $r<r(n_1)$ be the minimal index for which $N^n(X, Z_{g(r)})_t$ is a proper quotient of $N^{g(r)}(X, Z_{g(r)})$ for some $n>n_1.$  Moreover we only consider fundamental sequences for $N^n(X, Z_{g(r)})_t$ modulo $R^{r+1}$  that are not of maximal complexity, in particular, their complexity is less than the complexity of
$S_1(X_1,\ldots , X_n)=1$. By induction, the procedure for $N^n(X, Z_{g(r)})_t$ is finite.

This proves the proposition. Theorem \ref{main} follows from the proposition.


\end{document}